\newcommand{\qed}{$\Box$}
\newenvironment{@abssec}[1]{%
    \if@twocolumn

      \section*{#1}%
    \else

      \vspace{.05in}\footnotesize
      \parindent .2in
 {\upshape\bfseries #1. }\ignorespaces
    \fi}
\par\vspace{.1in}\fi}
\newenvironment{keywords}{\begin{@abssec}{\keywordsname}}{\end{@abssec}}
\newenvironment{AMS}{\begin{@abssec}{\AMSname}}{\end{@abssec}}
\newcommand\keywordsname{Key words}
\newcommand\AMSname{AMS subject classifications}
\newcommand\AMname{AMS subject classification}
\newtheorem{theorem}{Theorem}
 \newtheorem{lemma}[theorem]{Lemma}
\newtheorem{remark}[theorem]{Remark}
\def\qed{\vbox{\hrule height0.6pt\hbox{%
  \vrule height1.3ex width0.6pt\hskip0.8ex
  \vrule width0.6pt}\hrule height0.6pt
 }}
\def\theequation{\arabic{section}.\arabic{equation}}
 \def\thetheorem{\arabic{section}.\arabic{theorem}}
\def\theequation{\arabic{section}.\arabic{equation}}
 \def\thetheorem{\arabic{section}.\arabic{theorem}}
\def\veps{\varepsilon}
\def\RE{\mathbb R}
\def\ovr{\overline}
\def\pa{\partial}
\def\ga{\gamma}
\def\Om{\Omega}
\def\supp{\mbox{\rm supp }}
\title{Interaction between nonlinear diffusion\\
and geometry of domain
\thanks{This research was partially supported by a Grant-in-Aid
for Scientific Research (B) ($\sharp$ 20340031) of
Japan Society for the Promotion of Science and by a
Grant of the Ital\-ian MURST.}}
\author{Rolando Magnanini\thanks{Dipartimento di Matematica U.~Dini,
Universit\` a di Firenze, viale Morgagni 67/A, 50134 Firenze, Italy.
({\tt magnanin@math.unifi.it}).} 
and Shigeru Sakaguchi\thanks{Department of Applied Mathematics,
Graduate School of  Engineering, Hiroshima
University, Higashi-Hiroshima, 739-8527,  Japan.
({\tt sakaguch@amath.hiroshima-u.ac.jp}).}}
\begin{document}

\maketitle

\begin{abstract}
 Let  $\Omega$ be a domain in $\mathbb R^N$, where $N \ge 2$ and $\partial\Omega$ is not necessarily bounded. We consider  nonlinear diffusion equations of the form  $\partial_t u= \Delta \phi(u)$. Let $u=u(x,t)$ be the solution of either the initial-boundary value problem over $\Omega$, where the initial value equals zero and the boundary value equals $1$,  or 
the Cauchy problem where the initial data is  the characteristic function of the set $\mathbb R^N\setminus \Omega$. 

We consider  an open ball $B$  in $\Omega$ whose closure intersects $\partial\Omega$ only at one point, and
we derive asymptotic estimates for the content of substance 
in $B$ for short times in terms of  geometry of $\Omega$.
Also, we obtain a characterization of the hyperplane involving a stationary level surface of  $u$ by using the sliding method due to Berestycki, Caffarelli, and Nirenberg. These results tell us about interactions between nonlinear diffusion and geometry of domain.
\end{abstract}

%\vskip.2cm

\begin{keywords}
nonlinear diffusion, geometry of domain, initial-boundary value problem, Cauchy problem, initial behavior.
\end{keywords}

\begin{AMS}
Primary 35K55, 35K60; Secondary   35B40.
\end{AMS}

\pagestyle{plain}
\thispagestyle{plain}
\markboth{R. MAGNANINI AND S. SAKAGUCHI}{Nonlinear diffusion
and geometry of domain}

\pagestyle{plain}
\thispagestyle{plain}

\section{Introduction}
 Let  $\Omega$ be a $C^2$ domain in $\mathbb R^N,$ where $N \ge 2$ and $\partial\Omega$ is not necessarily bounded, and let $\phi : \mathbb R \to \mathbb R$ satisfy
\begin{equation}
\label{nonlinearity}
\phi \in C^2(\mathbb R), \quad \phi(0) = 0, \ \mbox{ and }\ 0 < \delta_1 \le \phi^\prime(s) \le \delta_2\ \mbox{ for } s \in \mathbb R,
\end{equation}
 where $\delta_1, \delta_2$ are positive constants. Consider the unique bounded solution $u = u(x,t)$ of either
  the  initial-boundary value problem: 
\begin{eqnarray}
&\partial_t u=\Delta \phi(u)\ \ &\mbox{in }\ \Omega\times (0,+\infty),\label{diffusion}\\
&u=1\ \ &\mbox{on }\ \partial\Omega\times (0,+\infty),\label{dirichlet}\\
&u=0\ \ &\mbox{on }\ \Omega\times \{0\},\label{initial}
\end{eqnarray}
or the Cauchy  problem:
\begin{equation}
\label{cauchy}
\partial_t u=\Delta \phi(u)\ \mbox{ in }\ \mathbb R^N \times (0, +\infty)\quad\mbox{ and }\ u = \chi_{\Omega^c}\ \mbox{ on }\ \mathbb R^N \times \{0\};
\end{equation}
here $\chi_{\Omega^c}$ denotes the characteristic function of the set $\Omega^c = \mathbb R^N \setminus \Omega$. Note that
%\marginpar{$\leftarrow$}
the uniqueness of the solution of either problem \eqref{diffusion}-\eqref{initial} or \eqref{cauchy} follows from the 
comparison principle (see Theorem \ref{th:comparison principle} in the present paper).
%\marginpar{Please, make it clear from here...}
Since $\partial\Omega$ is of class $C^2$, we can construct barriers at any point on the boundary $\partial\Omega\times (0,+\infty)$ for problem \eqref{diffusion}-\eqref{initial}.  Thus, by the theory of uniformly parabolic equations (see \cite{LSU}), we have  the existence of a solution $u \in C^{2,1}(\Omega\times(0,+\infty)) \cap L^\infty(\Omega \times (0,+\infty))\cap C^0(\overline{\Omega} \times (0,+\infty)) $ such that $u(\cdot,t) \to 0$ in $L^1_{loc}(\Omega)$ as $t \to 0$ for problem \eqref{diffusion}-\eqref{initial}.  For problem  \eqref{cauchy},  since for any bounded measurable initial data there exists a  bounded solution of the Cauchy problem for $\partial_t u =\Delta \phi(u)$ by the theory of uniformly parabolic equations, we always have a solution $u \in C^{2,1}(\mathbb R^N \times (0,+\infty))\cap L^\infty(\mathbb R^N \times (0,+\infty))$ such that $u(\cdot,t) \to \chi_{\Omega^c}(\cdot)$ in $L^1_{loc}(\mathbb R^N)$ as $t 
\to 0$ for any domain  $\Omega$, that is, in the case of problem \eqref{cauchy}, we only need that the set $\Omega$ is measurable.
%\marginpar{...to here}

\par
The differential equation in \eqref{diffusion} or in \eqref{cauchy} has
the property of {\it infinite} speed of propagation of disturbances from rest, since  
\begin{equation}
\label{infinite speed}
\int_0^1 \frac {\phi^\prime(\xi)}\xi d\xi = + \infty, 
\end{equation}
as it follows from \eqref{nonlinearity}.
\par
By  the strong comparison principle, we know that
\begin{equation*}
\label{bounds}
0 < u < 1\ \mbox{either  in } \Omega \times (0, +\infty) \mbox{ or in } \mathbb R^N \times(0,+\infty);
\end{equation*}
also, as $t\to 0^+,$ $u$ exhibits a {\it boundary layer:} while $u\to 0$ in $\Om,$ $u$ remains
equal to $1$ on $\pa\Om.$ 
The profile of $u$ as $t\to 0^+$ is controlled by the function $\Phi$ defined by
\begin{equation}
\label{definition of Phi}
\Phi(s) = \int_1^s \frac {\phi^\prime(\xi)}{\xi} d\xi\quad\mbox{ for } s > 0.
\end{equation}
In fact, in \cite[Theorem 1.1 and Theorem 4.1]{MSpoincare} we showed that, 
%\marginpar{$\leftarrow$}
if $\pa\Om$ is bounded and $u$ is the solution of either problem {\rm (\ref{diffusion})-(\ref{initial})}  or problem {\rm (\ref{cauchy})}, then
\begin{equation}
\label{varadhan formula}
\lim_{t \to 0^+} -4t\Phi(u(x,t)) = d(x)^2 \ \mbox{ uniformly on every compact subset of } \Om.
\end{equation}
Here, $d = d(x)$ is the distance function:
\begin{equation}
\label{definition of distance}
d(x) = \mbox{ dist}(x, \partial\Omega)\quad\mbox{ for }  x \in \Omega.
\end{equation}
\par
Formula \eqref{varadhan formula} generalizes one obtained by Varadhan \cite{Va}
for the heat equation (and quite general linear parabolic equations); in
that case, $\Phi(s) = \log s$ since $\phi(s)\equiv s;$ 
\eqref{varadhan formula} tells us about an interaction between nonlinear diffusion and geometry of domain, since the function $d(x)$ is deeply related to  geometry of $\Omega$.
\par
We point out that \eqref{varadhan formula} was proved in \cite{MSpoincare} when
$\pa\Om$ is bounded. In Theorem \ref{th:nonlinear varadhan} in Section \ref{section simple application}, we will show how to extend its validity
to the case in which $\pa\Om$ is unbounded. Moreover,
with Theorem \ref{th:nonlinear varadhan} in hand, in Theorem \ref{th:hyperplane characterization} we obtain a characterization of 
hyperplanes as stationary level surfaces of the solution $u$
(i.e. surfaces where $u$ remains constant at any given time); 
this result generalizes one of those obtained in \cite{MSindiana, MSjde} for the heat equation.
As in \cite[Theorem 3.4]{MSindiana}, the proof still relies on the sliding method 
due to Berestycki, Caffarelli, and Nirenberg \cite{BCN} but,
by a different argument, allows us to treat more general assumptions on $\Om.$ 

Let us now state our main theorem which shows 
a more intimate link between short-time nonlinear diffusion and the geometry of the domain $\Om.$

%%                      %%
%%  Theorem 1.1 begins. %%
%%                      %%

\begin{theorem}
\label{th:interaction curvatures} Let $u$ be the solution of 
either problem {\rm (\ref{diffusion})-(\ref{initial})} or problem {\rm (\ref{cauchy})}.
Let $x_0 \in \Omega$ and assume that the open ball $B_R(x_0)$ centered at $x_0$ and with radius $R$ 
is contained in  $\Omega$ and such that $\overline{B_R(x_0)} \cap \partial\Omega = \{ y_0 \}$ for some $y_0 \in \partial\Omega$.
\par
Then we have:
\begin{equation}
\label{asymptotics and curvatures}
\lim_{t\to 0^+}t^{-\frac{N+1}4 }\!\!\!\int\limits_{B_R(x_0)}\! u(x,t)\ dx=
c(\phi,N)\left\{\prod\limits_{j=1}^{N-1}\left[\frac 1R - \kappa_j(y_0)\right]\right\}^{-\frac 12}.
\end{equation}
Here, 
$\kappa_1(y_0),\dots,\kappa_{N-1}(y_0)$ denote the principal curvatures of $\partial\Omega$ at $y_0$ with 
respect to the inward normal direction to $\partial\Omega$  
and $c(\phi,N)$ is a positive constant depending only on $\phi$ and $N$ 
(of course,  $c(\phi,N)$ depends on the problems  {\rm (\ref{diffusion})-(\ref{initial})}  or {\rm (\ref{cauchy})}). 
\par
When $\kappa_j(y_0) = \frac 1R$ for some $j \in \{ 1, \cdots, N-1\}$, 
the formula \eqref{asymptotics and curvatures} holds by setting the right-hand side to $+\infty$ (notice that 
$\kappa_j(y_0) \le 1/R$
for every $j \in \{ 1, \cdots, N-1\}$).
\end{theorem}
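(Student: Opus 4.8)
The plan is to reduce the asymptotic evaluation of the integral to a local computation near the single contact point $y_0$, and then to estimate $u$ there by comparison with explicitly solvable one-dimensional (or radially symmetric) profiles whose short-time behavior is governed by the function $\Phi$ through \eqref{varadhan formula}. First I would use \eqref{varadhan formula} (in the unbounded version, Theorem \ref{th:nonlinear varadhan}) together with the boundary-layer structure $0<u<1$ to localize: away from $y_0$ the distance $d(x)$ to $\partial\Omega$ inside $B_R(x_0)$ is bounded below by a positive constant, so by \eqref{varadhan formula} one has $\Phi(u(x,t))\le -\frac{c}{t}$ there, hence $u(x,t)$ is superexponentially small (like $e^{-c/t}$ after inverting $\Phi$, using \eqref{nonlinearity} to control $\Phi$ from above and below by a logarithm). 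Consequently the whole integral concentrates in a shrinking neighborhood $B_R(x_0)\cap\{|x-y_0|<\rho(t)\}$ with $\rho(t)\to0$ slowly (e.g. $\rho(t)=t^{1/4}\log(1/t)$ is a convenient scale), and outside that neighborhood the contribution is $o(t^{(N+1)/4})$.

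Next I would set up two-sided barriers adapted to the osculating geometry at $y_0$. Since $\overline{B_R(x_0)}$ touches $\partial\Omega$ only at $y_0$ from inside, for any $\eps>0$ there are balls $B_{R}(x_0)\subset\Omega\subset$ (complement of a ball of radius $R_j^{\pm}$ tangent at $y_0$) locally, where the radii encode the principal curvatures: intrinsically the boundary is, to second order at $y_0$, the graph $x_N=\frac12\sum_{j=1}^{N-1}\kappa_j(y_0)\,x_j'^2+o(|x'|^2)$, and the sphere $\partial B_R(x_0)$ is the graph $x_N=\frac1{2R}|x'|^2+o(|x'|^2)$. The relevant ``gap'' between $B_R(x_0)$ and $\partial\Omega$ near $y_0$ is therefore governed by the quadratic form with eigenvalues $\frac1R-\kappa_j(y_0)\ge0$. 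I would compare $u$ from above and below with solutions of the one-dimensional problem $\partial_t v=(\phi(v))''$ on a half-line with $v=1$ at the moving boundary determined by this quadratic form; the relevant one-dimensional similarity profile $v(x,t)=\psi\!\left(\frac{\eta}{\sqrt t}\right)$ (or its $\Phi$-transform) is the same object that produces \eqref{varadhan formula}. Writing $x=y_0+x'e'+x_N e_N$ in boundary-adapted coordinates, on the concentration region $u(x,t)\approx \psi\!\Big(\tfrac{1}{\sqrt t}\big(x_N-\tfrac12\sum(\tfrac1R-\kappa_j)x_j'^2+o\big)\Big)$ up to multiplicative $1\pm\eps$ errors, uniformly for $\eps\to0$ as $t\to0$.

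Then the integral becomes, after the change of variables $x_j'=\sqrt t\, \xi_j$ ($j=1,\dots,N-1$), $x_N=\sqrt t\,\xi_N$, a Laplace-type integral: the volume element contributes $t^{N/2}$, and since the region of non-negligible $u$ has $\xi_N$ of order $|\xi'|^2$, the effective thickness in the $\xi_N$ direction, after completing the square, produces the extra factor coming from integrating $\psi$ against the Gaussian-like decay in the shifted variable — this collapses the $N$-fold scaled integral to $t^{N/2}$ times a factor $t^{-(N-1)/4}$ from the parabolic $|\xi'|^2\sim\xi_N$ scaling, giving the exponent $\frac N2-\frac{N-1}4=\frac{N+1}4$. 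The constant $c(\phi,N)$ is then the product of $\int_0^\infty \psi(\,\cdot\,)$-type integrals (one genuinely one-dimensional factor depending on $\phi$ through $\Phi$, and $N-1$ Gaussian factors $\int_{\RE}e^{-a\xi_j^2}\,d\xi_j=\sqrt{\pi/a}$ with $a\propto(\tfrac1R-\kappa_j)$), which is exactly where the product $\big\{\prod_{j=1}^{N-1}[\tfrac1R-\kappa_j(y_0)]\big\}^{-1/2}$ emerges; degeneracy of some $\kappa_j=1/R$ makes the corresponding Gaussian integral diverge, consistent with the stated $+\infty$ convention. I expect the main obstacle to be making the barrier construction and the matched-asymptotics estimate uniform in the scaled variables $\xi$ up to the edge of the concentration region (i.e. controlling the $o(|x'|^2)$ curvature remainder and the boundary-layer transition on the parabolic scale simultaneously), so that the $1\pm\eps$ errors in $u$ integrate to a genuine $(1+o(1))$ factor rather than corrupting the leading constant; the Cauchy-problem case \eqref{cauchy} requires the analogous one-sided comparison from the exterior but is otherwise parallel, and only changes $c(\phi,N)$.
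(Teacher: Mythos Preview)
Your overall plan---localize near $y_0$, compare $u$ with a one-dimensional similarity profile in the distance variable, then rescale and integrate---points in the right direction, and properly executed it is equivalent to the paper's argument. But two concrete issues would derail the proof as written.

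First, your asymptotic for $u$ is wrong. The correct leading behavior is $u(x,t)\approx f_1\!\big(d(x)/\sqrt t\big)$ with $d(x)\approx x_N-\tfrac12\sum_j\kappa_j(y_0)\,x_j'^2$ in boundary coordinates; the quadratic form with eigenvalues $\tfrac1R-\kappa_j$ is the \emph{gap} between $\partial B_R(x_0)$ and $\partial\Omega$, not the distance to $\partial\Omega$, and it enters only through the domain of integration (the lens $B_R(x_0)\cap\Omega_\rho$), never through the integrand. With this correction the natural tangential scaling is $x'\sim t^{1/4}$ (not $t^{1/2}$) and $d\sim t^{1/2}$, and the constant comes out as $2^{(N-1)/2}\omega_{N-1}\int_0^\infty f_1(\xi)\,\xi^{(N-1)/2}\,d\xi$, not as a product of Gaussian integrals (the profile $f_1$ is Gaussian only when $\phi$ is linear).

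Second, and more seriously, the barrier step is the real work and cannot be handled by \eqref{varadhan formula} alone: that formula is uniform only on compact subsets of $\Omega$, i.e.\ precisely \emph{away} from $\partial\Omega$, whereas the entire mass of the integral lives in a $\sqrt t$-tube around $\partial\Omega$. The paper closes this gap (Lemmas \ref{barriers for IBVP} and \ref{barriers for Cauchy}) by constructing barriers $w_\pm(x,t)=f_\pm\!\big(d(x)/\sqrt t\big)$ where $f_\pm$ are small perturbations of the similarity solution $f_1$ of $(\phi'(f)f')'+\tfrac12\xi f'=0$: the Atkinson--Peletier asymptotic $-4\Phi(f_c(\xi))/\xi^2\to1$ together with Theorem \ref{th:nonlinear varadhan} is used to match $w_\pm$ with $u$ on an interior hypersurface $\Gamma_{\rho_0}$, and the comparison principle in $\Omega_{\rho_0}$ then absorbs the curvature term $\Delta d$. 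With these barriers the paper bypasses your local coordinate expansion entirely: it applies the co-area formula to write $\int_{B_R(x_0)}w_\pm\,dx$ as an integral over the level sets $\Gamma_s$ of $d$, and invokes the geometric Lemma \ref{lm:asympvol}, which gives $\mathcal H^{N-1}(\Gamma_s\cap B_R(x_0))\sim 2^{(N-1)/2}\omega_{N-1}\big\{\prod_j(\tfrac1R-\kappa_j)\big\}^{-1/2}s^{(N-1)/2}$. Your direct-coordinate route would reproduce this once the errors above are fixed, but you still need those barrier lemmas---or an equivalent argument---to justify the pointwise comparison $u\approx f_1(d/\sqrt t)$ uniformly down to the boundary; Varadhan's formula on its own does not give it.
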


\begin{remark}{\rm
\label{concern regularity Th1.1} In view of the proof given in the end of Section \ref{section3},
under the existence of the solution $u$ of problem \eqref{diffusion}-\eqref{initial}, 
we need not assume that the entire $\partial\Omega$ is of class $C^2$ but only that 
it is of class $C^2$ in a neighborhood of the point $y_0$. Of course,
in the case of problem \eqref{cauchy} we only need to assume that $\partial\Omega$ is of class $C^2$ in a neighborhood of $y_0$.}
\end{remark}
\par
A version of Theorem \ref{th:interaction curvatures} was proved in \cite{MSprsea} for problem \eqref{diffusion}-\eqref{initial}, under the assumptions that $\partial\Omega$ is bounded and $\phi$ satisfies either 
$\int_0^1 \frac {\phi^\prime(\xi)}\xi d\xi < +\infty$ or $\phi(s) \equiv s.$
The reason why we could not treat cases in which $\int_0^1 \frac {\phi^\prime(\xi)}\xi d\xi = +\infty$ and $\phi$ is nonlinear
was merely technical. 
To be precise, in \cite{MSprsea}, the construction of supersolutions and subsolutions to problem \eqref{diffusion}-\eqref{initial}
was eased by the property of {\it finite} speed of propagation of disturbances from rest 
that descends from the assumption $\int_0^1 \frac {\phi^\prime(\xi)}\xi d\xi < +\infty.$ 
In fact, such barriers were constructed in a set $\Omega_{\rho} \times (0,\tau],$  with
\begin{equation}
\label{neighborhood of boundary}
\Omega_{\rho} =\{ x \in \Omega\ :\ d(x) < \rho \},
\end{equation}
where $\rho$ and $\tau$ were chosen sufficiently small so that 
the solution $u$ equals zero on the set $\Gamma_\rho\times (0,\tau],$ with 
\begin{equation}
\label{level surfaces of distance functions}
\Gamma_{\rho} =\{ x \in \Omega\ :\ d(x) =\rho \}.
\end{equation}

This property does not occur when \eqref{infinite speed} is in force. However, formula \eqref{asymptotics and curvatures} seems general and is expected to hold for general diffusion equations. Here, we in fact overcome 
some of those technical difficulties and prove \eqref{asymptotics and curvatures} for a class of nonlinear diffusion equations satisfying \eqref{infinite speed}; moreover,
the method of the proof of the present article enables us to treat also the case in which $\partial\Omega$ is unbounded.  
To be more specific, we construct the supersolutions and subsolutions for $u$ without using
the linearity of the heat equation and the result of Varadhan \cite{Va} 
as done in \cite{MSprsea}, but instead we exploit Theorem  \ref{th:nonlinear varadhan} together with a result of 
Atkinson and Peletier \cite[Lemma 4, p. 383]{AtP} concerning the asymptotic behavior of 
one-dimensional similarity solutions (see \eqref{limiting behavior} in the present paper). 
Then, as in \cite{MSprsea}, we take advantage of their explicit form $f_\pm(t^{-\frac 12} d(x))$  (see Lemmas \ref{barriers for IBVP} and \ref{barriers for Cauchy} in the present paper) to calculate their integrals over the ball $B_R(x_0)$ with the aid of the co-area formula. 
The proof of Theorem \ref{th:interaction curvatures} is finally completed by letting $t \to 0^+$ 
and using a geometric lemma \cite[Lemma 2.1, p. 376]{MSprsea} (see Lemma \ref{lm:asympvol} in the present paper). These  will be done in Section \ref{section3}.

%\marginpar{$\leftarrow$}
In the Appendix, we give proofs of several facts used in Section \ref{section3}, 
%\marginpar{$\leftarrow$}
and prove a comparison principle (see Theorem \ref{th:comparison principle}) for $\partial_t u=\Delta \phi(u)$
over general domains $\Omega$ including the case where $\partial\Omega$ is  unbounded (in this case we could not find a proof of Theorem \ref{th:comparison principle} in the literature). Once the comparison principle is proved, then the strong comparison principle follows immediately.

%%%%%%%%%%%%%%%%%%%%%%%%%%%%%%%%%%%%%%%%%%%%%%%%%%%%%%
%%%%%%%%%%%%%%%%%%%%%%%%%%%%%%%%%%%%%%%%%%%%%%%%%%%%%%
%%%%%%%%%%%%%%%%%%%%%%%%%%%%%%%%%%%%%%%%%%%%%%%%%%%%%%
%%%%%%%%%%%%%%             Section 2 begins        %%%%%%%%%%%%%%%%%%%%%%%%
%%%%%%%%%%%%%%%%%%%%%%%%%%%%%%%%%%%%%%%%%%%%%%%%%%%%%%
%%%%%%%%%%%%%%%%%%%%%%%%%%%%%%%%%%%%%%%%%%%%%%%%%%%%%%
%%%%%%%%%%%%%%%%%%%%%%%%%%%%%%%%%%%%%%%%%%%%%%%%%%%%%%

\setcounter{equation}{0}
\setcounter{theorem}{0}

\section{Short-time asymptotic profile in the unbounded case and application}
\label{section simple application}

%%                      %%
%%  Theorem 2.1 begins. %%
%%                      %%

We begin with our extension of formula \eqref{varadhan formula} to the case in which $\pa\Om$ 
is unbounded.

\begin{theorem}
\label{th:nonlinear varadhan} Let $\Om\subset\RE^N, \, N\ge 2,$ be any domain 
with boundary $\pa\Om$ of class $C^2$ and let $u$ be the solution of either problem 
\eqref{diffusion}-\eqref{initial} or \eqref{cauchy}.
\par
Then \eqref{varadhan formula} holds true.
\end{theorem}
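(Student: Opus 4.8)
The plan is to reduce the statement to the case of bounded $\partial\Omega$, which is the content of \cite[Theorem 1.1 and Theorem 4.1]{MSpoincare}, by a localization argument based on the comparison principle of Theorem \ref{th:comparison principle}. The key point is that, once a compact set $K\subset\Omega$ has been fixed, the distance function $d$ restricted to $K$ only ``sees'' a bounded portion of $\partial\Omega$: setting $m=\max_{x\in K}d(x)<+\infty$, every $x\in K$ has a nearest point of $\partial\Omega$ at distance at most $m$. Hence, if we alter $\Omega$ only far from $K$ so as to obtain a domain with bounded boundary, the values of the distance function on $K$ are left unchanged, whereas $u$ depends monotonically on the domain.

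I would fix a compact set $K\subset\Omega$ and, after a translation, assume $K\subset B_r(0)$. For $R$ large, depending only on $K$, I would introduce two auxiliary $C^2$ domains with bounded boundary: an ``outer'' domain $\widehat\Omega\supseteq\Omega$, obtained by adjoining to $\Omega$ the exterior $\{\,|x|>R\,\}$ of a large ball and rounding the resulting junction, and an ``inner'' domain $\widetilde\Omega\subseteq\Omega$, obtained from $\Omega\cap\{\,|x|<R\,\}$ by rounding its junction near the sphere $\{\,|x|=R\,\}$. Choosing $R$ so large that $K\subset\{\,|x|<R-m-1\,\}$ and (for a.e.\ $R$) so that $\{\,|x|=R\,\}$ meets $\partial\Omega$ transversally, all the modifications take place at distance larger than $m$ from $K$; consequently the distance functions $d_{\widehat\Omega}$ and $d_{\widetilde\Omega}$ to $\partial\widehat\Omega$ and $\partial\widetilde\Omega$ coincide with $d$ on $K$.

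Let $\widehat u$ and $\widetilde u$ be the solutions of the corresponding problems --- \eqref{diffusion}-\eqref{initial} or \eqref{cauchy} --- on $\widehat\Omega$ and $\widetilde\Omega$. For \eqref{diffusion}-\eqref{initial}: $\widehat u$ solves the equation in $\Omega\times(0,+\infty)$, satisfies $\widehat u<1$ on $\partial\Omega\times(0,+\infty)$ and $\widehat u(\cdot,t)\to0$ in $L^1_{loc}(\Omega)$ as $t\to0^+$, so Theorem \ref{th:comparison principle} gives $\widehat u\le u$ in $\Omega\times(0,+\infty)$; likewise $u$ solves the equation in $\widetilde\Omega\times(0,+\infty)$, satisfies $u\le1=\widetilde u$ on $\partial\widetilde\Omega\times(0,+\infty)$ and vanishes initially, so $u\le\widetilde u$ in $\widetilde\Omega\times(0,+\infty)$. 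For \eqref{cauchy}: from $\widetilde\Omega\subseteq\Omega\subseteq\widehat\Omega$ we get $\chi_{\widehat\Omega^c}\le\chi_{\Omega^c}\le\chi_{\widetilde\Omega^c}$, so Theorem \ref{th:comparison principle} gives $\widehat u\le u\le\widetilde u$ on $\RE^N\times(0,+\infty)$. Since $\Phi$ is strictly increasing on $(0,+\infty)$, in both cases
\[
-4t\,\Phi\big(\widetilde u(x,t)\big)\le -4t\,\Phi\big(u(x,t)\big)\le -4t\,\Phi\big(\widehat u(x,t)\big),\qquad (x,t)\in K\times(0,+\infty).
\]
Now $K$ is a compact subset of both $\widehat\Omega$ and $\widetilde\Omega$, which have bounded $C^2$ boundary, so formula \eqref{varadhan formula} in the known bounded case applies to $\widehat u$ and to $\widetilde u$: as $t\to0^+$, both $-4t\,\Phi(\widehat u)\to d_{\widehat\Omega}^2=d^2$ and $-4t\,\Phi(\widetilde u)\to d_{\widetilde\Omega}^2=d^2$ uniformly on $K$. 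Squeezing, $-4t\,\Phi(u(x,t))\to d(x)^2$ uniformly on $K$; since $K$ was an arbitrary compact subset of $\Omega$, this is precisely \eqref{varadhan formula}.

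The step I expect to be the main obstacle --- although it is purely technical --- is the construction of $\widehat\Omega$ and $\widetilde\Omega$: one must produce $C^2$ domains with bounded boundary satisfying $\widetilde\Omega\subseteq\Omega\subseteq\widehat\Omega$ and leaving $d$ unchanged on $K$, which requires smoothing $\partial\Omega$ near the artificial sphere $\{\,|x|=R\,\}$ in a controlled way (picking $R$ so that $\{\,|x|=R\,\}$ is transversal to $\partial\Omega$ and then rounding off the transversal corner is the standard device). Everything else follows directly from the monotone dependence of $u$ on the domain, granted by the comparison principle of Theorem \ref{th:comparison principle}, and from the already established bounded-boundary case of \eqref{varadhan formula}.
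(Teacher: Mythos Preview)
Your argument is correct and takes a genuinely different route from the paper's. Rather than building two auxiliary $C^2$ domains $\widetilde\Omega\subseteq\Omega\subseteq\widehat\Omega$ with bounded boundary that agree with $\Omega$ near $K$, the paper fixes a single point $x_0\in\Omega$ and compares $u$ with the solutions $u^+$ on the ball $B_{d(x_0)}(x_0)\subset\Omega$ and $u^-$ on the exterior $\mathbb R^N\setminus\overline{B_\delta(z)}$ of a small ball $B_\delta(z)\subset\mathbb R^N\setminus\overline\Omega$ with $|x_0-z|<d(x_0)+\varepsilon$ (and analogous Cauchy data in the second case). The upper barrier matches $d(x_0)$ exactly, while the lower one only yields $\limsup_{t\to0^+}\bigl(-4t\Phi(u(x_0,t))\bigr)\le (d(x_0)+\varepsilon)^2$, so an $\varepsilon\to0$ step is needed, and uniformity on compact sets is obtained afterwards by a scaling argument together with a lemma of Varadhan guaranteeing that $\delta$ can be chosen uniformly. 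Thus the paper's comparison domains are as simple as possible (balls, no smoothing required), at the price of a less direct passage to uniform convergence; your construction is heavier --- the transversal-intersection smoothing you rightly flag as the main technical point --- but once $\widetilde\Omega$ and $\widehat\Omega$ are in hand the uniform squeeze on $K$ is immediate, since $d_{\widetilde\Omega}=d_{\widehat\Omega}=d$ there. One minor correction: on the portion of $\partial\Omega$ that coincides with $\partial\widehat\Omega$ you actually have $\widehat u=1$, not $\widehat u<1$; the inequality $\widehat u\le 1=u$ on $\partial\Omega\times(0,+\infty)$ is what is needed, and the comparison goes through unchanged.
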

\begin{remark}{\rm
\label{concern regularity Th 2.1} In view of the proof given below, instead of assuming that $\partial\Omega$ is of class $C^2$, we only need to assume that $\partial\Omega = \partial\left(\mathbb R^N\setminus \overline{\Omega}\right)$  under the existence of the solution $u$ of problem \eqref{diffusion}-\eqref{initial}. Of course, in the case of problem \eqref{cauchy}, we only need to assume that $\partial\Omega = \partial\left(\mathbb R^N\setminus \overline{\Omega}\right)$.
}
\end{remark}

\begin{proof}
The case where $\partial\Omega$ is bounded is treated in \cite{MSpoincare};
here, we shall assume that $\pa\Om$ is unbounded. 

Take any point $x_0 \in \Omega$. For each $\varepsilon > 0$, there exists an open ball $B_\delta(z),$ 
centered at $z$ and with radius $\delta,$ contained in $\mathbb R^N \setminus \ovr{\Om},$ and such that $|x_0-z| < d(x_0) + \varepsilon.$

Consider problem {\rm (\ref{diffusion})-(\ref{initial})} first. Let $u^\pm = u^\pm(x,t)$ be bounded solutions of the following initial-boundary value problems:
\begin{eqnarray*}
&\partial_t u^+=\Delta \phi(u^+)\ \ &\mbox{in }\ B_{d(x_0)}(x_0)\times (0,+\infty),\label{diffusion inside}\\
&u^+=1\ \ &\mbox{on }\ \partial B_{d(x_0)}(x_0)\times (0,+\infty),\label{dirichlet inside}\\
&u^+=0\ \ &\mbox{on }\ B_{d(x_0)}(x_0)\times \{0\},\label{initial-inside}
\end{eqnarray*}
and
\begin{eqnarray*}
&\partial_t u^-=\Delta \phi(u^-)\ \ &\mbox{in }\ \left(\mathbb R^N \setminus \overline{B_{\delta}(z)}\right)\times (0,+\infty),\label{diffusion outside}\\
&u^-=1\ \ &\mbox{on }\ \partial B_{\delta}(z)\times (0,+\infty),\label{dirichlet outside}\\
&u^-=0\ \ &\mbox{on }\ \left(\mathbb R^N \setminus \overline{B_{\delta}(z)}\right)\times \{0\},\label{initial outside}
\end{eqnarray*}
respectively.
Then it follows from the comparison principle that
\begin{equation}
\label{comparison from both sides}
u^-(x_0,t)\le u(x_0,t) \le u^+(x_0,t)\ \mbox{ for every }\ t>0,
\end{equation}
which gives
$$
-4t\Phi(u^-(x_0,t)) \ge -4t\Phi(u(x_0,t)) \ge -4t\Phi(u^+(x_0,t))\ \mbox{ for every }\ t>0.
$$
By \cite[Theorem 1.1]{MSpoincare}, letting $t \to 0^+$ yields that
$$
(d(x_0)+\varepsilon)^2 \ge \limsup_{t \to 0^+}\left(  -4t\Phi(u(x_0,t) \right) \ge \liminf_{t \to 0^+}\left(  -4t\Phi(u(x_0,t) \right) = d(x_0)^2.
$$
This implies \eqref{varadhan formula}, since $\veps>0$ is arbitrary. 
Furthermore, let $\rho_0$ and $\rho_1$ be given such that $0 < \rho_0 \le \rho_1 < +\infty;$ 
then by a scaling argument, we infer that the convergence in  \eqref{varadhan formula} is uniform
in every subset $F$ of $\{ x \in \Omega\ :\ \rho_0 \le d(x) \le \rho_1  \}$ in which
the number $\delta > 0$ can be chosen independently of each point $x \in F$. 
In particular, when $F$ is compact, it was shown in \cite[Lemma 3.11, p. 444]{Va} that $\delta > 0$ can be chosen independently of each point $x \in F$ only under the assumption that $\partial\Omega = \partial\left(\mathbb R^N\setminus \overline{\Omega}\right)$.

It remains to consider problem {\rm (\ref{cauchy})}. Let $u^\pm = u^\pm(x,t)$ be bounded solutions of the following initial value problems:
\begin{equation*}
\label{cauchy inside}
\partial_t u^+=\Delta \phi(u^+)\ \mbox{ in }\ \mathbb R^N \times (0, +\infty)\quad\mbox{ and }\ u^+ = \chi_{{B_{d(x_0)}(x_0)}^c}\ \mbox{ on }\ \mathbb R^N \times \{0\},
\end{equation*}
and
\begin{equation*}
\label{cauchy outside}
\partial_t u^-=\Delta \phi(u^-)\ \mbox{ in }\ \mathbb R^N \times (0, +\infty)\quad\mbox{ and }\ u^- = \chi_{\overline{B_{\delta}(z)}}\ \mbox{ on }\ \mathbb R^N \times \{0\},
\end{equation*}
respectively. Then by the comparison principle we get \eqref{comparison from both sides}. 
%\marginpar{$\leftarrow$}
Thus, \eqref{varadhan formula}  follows similarly also in this case, with the aid of \cite[Theorem 4.1]{MSpoincare}.
\end{proof}
\vskip.2cm
We now give a simple application of the theorem just proved. Let $f \in C^2(\mathbb R^{N-1})$ and set
$$
\Omega = \{ x \in \mathbb R^N\ :\ x_N > f(x^\prime) \}, 
$$
where $x^\prime = (x_1,\cdots, x_{N-1}) \in \mathbb R^{N-1}$. Consider the solution $u=u(x,t)$  of either problem {\rm (\ref{diffusion})-(\ref{initial})} or problem {\rm (\ref{cauchy})}. 
In the sequel, it will be useful to know that
\begin{equation}
\label{decreasing in xN}
\frac {\partial u}{\partial x_N} < 0\ \mbox{either  in } \Omega \times (0, +\infty) \mbox{ or in } \mathbb R^N \times(0,+\infty);
\end{equation}
this is obtained by applying the comparison principle to $u(x^\prime, x_N+h, t)$ and $u(x, t)$ for $h>0$ 
and then the strong maximum principle to the resultant nonnegative function $\frac {\partial \phi(u)}{\partial x_N},$ since $v=\phi(u)$ satisfies $\partial_t v = \phi^\prime(u)\Delta v.$
\par
A hypersurface $\Gamma$ in $\Omega$ is said to be a {\it stationary level surface} of $u$ 
if at each time $t$ the solution $u$ remains constant on $\Gamma$ (a constant depending on $t$).
The following theorem characterizes the boundary $\partial\Omega$ 
in such a way that $u$ has a stationary level surface in $\Om.$

%%                      %%
%%  Theorem 2,3 begins. %%
%%                      %%

\begin{theorem}
\label{th:hyperplane characterization} Assume that for each $y^\prime \in \mathbb R^{N-1}$ there exists $h(y^\prime) \in \mathbb R$ such that
\begin{equation}
\label{BCNmodified}
\lim_{|x^\prime| \to \infty} \left[ f(x^\prime + y^\prime) - f(x^\prime) \right] = h(y^\prime).
\end{equation}
Let $u$ be the solution of either problem {\rm (\ref{diffusion})-(\ref{initial})} or problem {\rm (\ref{cauchy})}.
Suppose that $u$ has a stationary level surface $\Gamma$ in $\Omega$. 
\par
Then $f$ is affine, that is, $\partial\Omega$ must be a hyperplane.
\end{theorem}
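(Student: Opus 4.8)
The plan is to combine Theorem~\ref{th:nonlinear varadhan} with the sliding method of Berestycki, Caffarelli and Nirenberg \cite{BCN}, in the spirit of \cite[Theorem 3.4]{MSindiana} but arranged so as to tolerate an unbounded boundary. First I would observe that $\Gamma$ lies at a constant distance from $\partial\Omega$: if $x_1,x_2\in\Gamma$ then $u(x_1,t)=u(x_2,t)$ for every $t>0$, so $-4t\Phi(u(x_1,t))=-4t\Phi(u(x_2,t))$, and letting $t\to0^+$ and invoking Theorem~\ref{th:nonlinear varadhan} gives $d(x_1)=d(x_2)$; thus $d\equiv R$ on $\Gamma$ for some constant $R>0$. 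Since $\partial u/\partial x_N<0$ by \eqref{decreasing in xN}, the level sets $\{u(\cdot,t)=c\}$ are graphs over $\mathbb R^{N-1}$, so (the relevant connected component of) $\Gamma$ is a graph $x_N=\gamma(x')$ with $\gamma\in C^1$, $\gamma>f$ and $\dist\!\big((x',\gamma(x')),\partial\Omega\big)=R$; and taking limits in \eqref{BCNmodified} shows that $h$ is additive on $\mathbb R^{N-1}$, so that each $T_{y'}\colon x\mapsto(x'+y',\,x_N+h(y'))$ is a translation of $\mathbb R^{N}$.

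Next I would reduce the theorem to showing that $T_{y'}(\Omega)=\Omega$ for every $y'$, that is, $f(x'-y')+h(y')=f(x')$ identically; indeed, setting $x'=y'$ gives $h(y')=f(y')-f(0)$, so $g:=f-f(0)$ satisfies $g(x'+z')=g(x')+g(z')$ for all $x',z'$, and being additive and continuous $g$ is linear, i.e. $f$ is affine.

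The core is the sliding. Fix $y'$ and set $v:=u\circ T_{y'}^{-1}$, a solution of $\partial_tv=\Delta\phi(v)$ on $\Omega_{y'}:=T_{y'}(\Omega)$ with $v=1$ on $\partial\Omega_{y'}$ and $v(\cdot,0)=0$; then $T_{y'}(\Gamma)$ is a stationary level surface of $v$ at distance $R$ from $\partial\Omega_{y'}$, and $u$ and $v$ carry the \emph{same} value $c(t)$ on $\Gamma$ and on $T_{y'}(\Gamma)$ respectively. Put $p(x'):=[f(x'-y')+h(y')]-f(x')$; by \eqref{BCNmodified}, $p$ is continuous and $p(x')\to0$ as $|x'|\to\infty$, hence bounded. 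For $s\in\mathbb R$ let $\Omega_{y'}^{s}:=\Omega_{y'}+se_N$ and $v^{s}(x,t):=v(x-se_N,t)$. Whenever $\Omega_{y'}^{s}\subseteq\Omega$ one has $v^{s}=1\ge u$ on $\partial\Omega_{y'}^{s}$ and vanishing initial data, so the comparison principle (Theorem~\ref{th:comparison principle}) yields $u\le v^{s}$ in $\Omega_{y'}^{s}\times(0,+\infty)$. Letting $s$ decrease to the extreme value $s^{*}$ still compatible with $\Omega_{y'}^{s^{*}}\subseteq\Omega$, I would produce a contact between $u$ and $v^{s^{*}}$ — at a point of $\partial\Omega_{y'}^{s^{*}}\cap\partial\Omega$, or at an interior point lying on the parallel surfaces $\Gamma$ and $T_{y'}(\Gamma)+s^{*}e_N$, where one uses the monotonicity $\partial_{x_N}u,\partial_{x_N}v^{s^{*}}<0$, the constancy of the value $c(t)$ on these surfaces, and the fact that both sit at the same distance $R$ from the corresponding boundary. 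Then, writing $\phi(v^{s^{*}})-\phi(u)=c(x,t)\,(v^{s^{*}}-u)$ with $c\in[\delta_1,\delta_2]$ so that the nonnegative difference solves a uniformly parabolic equation exactly as for $\partial\phi(u)/\partial x_N$ in \eqref{decreasing in xN}, the strong comparison principle (in the interior case) or a Hopf/Serrin corner-lemma argument (in the boundary case) forces $u\equiv v^{s^{*}}$ in $\Omega_{y'}^{s^{*}}$; comparing boundary values ($u=1$ on $\partial\Omega$, while $v^{s^{*}}<1$ in the interior of $\Omega_{y'}^{s^{*}}$) this forces $\partial\Omega=\partial\Omega_{y'}^{s^{*}}$, whence \eqref{BCNmodified} gives $s^{*}=0$ and $p\equiv0$, i.e. $T_{y'}(\Omega)=\Omega$. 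By the reduction above, $f$ is affine.

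I expect the main obstacle to be everything happening at infinity: since $\partial\Omega$ is unbounded, one must check that the comparison principle genuinely applies on the unbounded sets $\Omega_{y'}^{s}$ and — above all — that at the extreme position $s^{*}$ the contact between $u$ and $v^{s^{*}}$ occurs at a \emph{finite} point and not only in the limit $|x'|\to\infty$, so that a maximum-principle argument can be run. This is exactly where hypothesis \eqref{BCNmodified} (which makes $\partial\Omega_{y'}^{s}$ asymptotic to $\partial\Omega$), the unbounded-domain comparison principle of Theorem~\ref{th:comparison principle}, and the stationary surface $\Gamma$ together with Theorem~\ref{th:nonlinear varadhan} have to be used in tandem; in the bounded case of \cite{MSindiana} this step is routine by compactness. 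A minor technical point is the graph property and $C^{1}$-regularity of $\Gamma$, which rest on \eqref{decreasing in xN} and the strong maximum principle.
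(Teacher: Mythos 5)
Your strategy is essentially the paper's: use Theorem \ref{th:nonlinear varadhan} and \eqref{decreasing in xN} to see that $\Gamma=\{d=R\}$ is a graph, then slide the translated domain and compare $u$ with the translated solution. But the decisive step is missing, and you flag it yourself without resolving it: the existence of a contact at a \emph{finite} point at the extremal sliding position. As written, your claim that at $s^*$ a contact between $u$ and $v^{s^*}$ ``is produced'' is not true in general; when $s^*=0$ the graphs of $f$ and of $f(\cdot-y')+h(y')$ may approach each other only as $|x'|\to\infty$, and then there is neither a boundary contact point nor an interior touching of $\Gamma$ with the translated stationary surface, so no maximum-principle argument can be run. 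The paper resolves this by a contradiction setup: assume $h_+(y')>h(y')$ (i.e.\ $s^*>0$). Then by \eqref{BCNmodified} the vertical gap $f(x'-y')+h(y')+s^*-f(x')$ tends to $s^*>0$ at infinity, while its infimum is $0$ by minimality of $s^*$, so the infimum is attained at a finite point, giving $x_0\in\partial\Omega\cap\partial\Omega_{y',h_+}$; the touching-ball property \eqref{touching balls}, which comes from the sup/inf-convolution identities \eqref{supconvolution}--\eqref{infconvolution} linking $f$ and $g$, then produces a point $z\in\Gamma\cap\Gamma_{y',h_+}$. This is exactly the interplay of \eqref{BCNmodified}, the stationary surface and Theorem \ref{th:nonlinear varadhan} that you name as the main obstacle but leave open. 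Note also that the case $s^*=0$ only yields $p\ge 0$, so you still need the complementary inequality, obtained by the symmetric slide ($h_-$ in the paper) or by applying the one-sided conclusion to $-y'$ as well; your sketch conflates the two situations.

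Once the interior point $z$ is available, your endgame is heavier than needed and partly shaky: you propose to force $u\equiv v^{s^*}$ via a Hopf/corner lemma at a boundary contact point, where both solutions equal $1$ and the data are discontinuous at $t=0$, so such a lemma would itself require justification, and then to compare boundaries to conclude $s^*=0$. The paper closes in one line instead: by the strong comparison principle, $u(x'-y',x_N-h_+,t)>u(x,t)$ strictly in $\Omega_{y',h_+}\times(0,\infty)$ (the two solutions cannot coincide, since part of $\partial\Omega_{y',h_+}$, where the translated solution equals $1$, lies inside $\Omega$, where $u<1$); evaluating at $z$ contradicts the fact that $u$ takes the same stationary value at $z\in\Gamma$ as the translated solution takes at $z\in\Gamma_{y',h_+}$. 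So no identity $u\equiv v^{s^*}$ and no boundary-point Hopf argument are needed; the heart of the proof in the unbounded setting is precisely the finite contact point, and that is the step your proposal does not establish.
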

\begin{remark}{\rm
\label{concern regularity Th 2.2} In view of the proof given below, instead of assuming that $f \in C^2(\mathbb R^{N-1})$, we only need to assume that $f \in C^0(\mathbb R^{N-1})$  under the existence of the solution $u$ of problem \eqref{diffusion}-\eqref{initial}. Of course, in the case of problem \eqref{cauchy}, we can replace the assumption $f \in C^2(\mathbb R^{N-1})$  with $f \in C^0(\mathbb R^{N-1})$.
}
\end{remark}

\begin{proof}
We shall use the sliding method due to Berestycki, Caffarelli, and Nirenberg \cite{BCN}. 
The condition \eqref{BCNmodified} is a modified version of (7.2) of \cite[p. 1108]{BCN}, 
in which $h(y^\prime)$ is supposed identically zero. 
\par
Since $\Gamma$ is a stationary level surface of $u$, 
%\marginpar{$\leftarrow$}
it follows from Theorem \ref{th:nonlinear varadhan}, \eqref{decreasing in xN} and the implicit function theorem that 
there exist a number $R > 0$ and a function $g \in C^2(\mathbb R^{N-1})$ such that
\begin{eqnarray}
\Gamma =\{ (x^\prime,g(x^\prime))\in \mathbb R^N :\ x^\prime\in\mathbb R^{N-1}\} 
= \{ x \in \mathbb R^N :\ d(x) = R \};\label{representation of Gamma}
\end{eqnarray}
moreover, it is easy to verify that the function $g$ satisfies
\begin{equation}
g(x^\prime) = \sup_{|x^\prime-y^\prime| \le R} \{ f(y^\prime) + \sqrt{R^2 -|x^\prime-y^\prime|^2} \}\  \mbox{ for every } 
 x^\prime \in \mathbb R^{N-1}.\label{supconvolution}
\end{equation}

Conversely, let $\nu(y^\prime)$ denote the unit upward normal vector to $\Gamma$ at $(y^\prime,g(y^\prime)) \in \Gamma;$ the facts that $g$ is smooth, $\partial\Omega$ is a graph, and $(y^\prime,g(y^\prime)) - R\nu(y^\prime) \in \partial\Omega$ for every $y^\prime\in\mathbb R^{N-1}$ imply that
\begin{eqnarray}
f(x^\prime) &=& \inf_{|x^\prime-y^\prime| \le R} \{ g(y^\prime) -\sqrt{R^2 -|x^\prime-y^\prime|^2} \}\ \mbox{ for every } x^\prime \in \mathbb R^{N-1} ;\label{infconvolution}
\\
\partial\Omega &=& \{ x \in \mathbb R^N :\ \mbox{ dist}(x, \{ y \in \mathbb R^N :\ y_N\ge g(y^\prime)\}) = R \}.\label{representation of partial Omega}
\end{eqnarray}
%\marginpar{...to here.}
Thus, it follows from \eqref{representation of Gamma} and \eqref{representation of partial Omega} that for every $x \in \partial\Omega$ there exists $z \in \Gamma$ satisfying 
\begin{equation}
\label{touching balls}
B_R(z) \subset \Omega\ \mbox{ and } \partial B_R(z) \cap \partial\Omega = \{ x \}.
\end{equation}
\par
For fixed $y^\prime \in \mathbb R^{N-1}$ and $h \in \mathbb R$, we define the translates:
\begin{eqnarray*}
\Omega_{y^\prime,h} = (y', h)+\Om, \ \ \Gamma_{y^\prime,h} = (y', h)+\Gamma;
\end{eqnarray*}
\eqref{BCNmodified} guarantees that the values 
\begin{eqnarray}
\label{upper and lower approximations}
&&h_+(y')=\inf\{ h \in \mathbb R : \Omega_{y^\prime,h}\subset\Omega\} \ \mbox{ and } \nonumber\\
&&h_-(y')=\sup\{ h \in \mathbb R : \Omega \subset \Omega_{y^\prime,h} \} 
\end{eqnarray}
are finite, since in fact, $h_-(y') \le h(y^\prime)\le h_+(y')$ for every  $y^\prime\in\mathbb R^{N-1}.$
\par
To complete our proof, it suffices to show that 
$$
h_-(y') = h(y^\prime) = h_+(y').
$$
Indeed, this yields that $\Omega = \Omega_{y^\prime,h(y^\prime)}$ for every $y^\prime \in \mathbb R^{N-1}$
and hence 
\begin{equation}
\label{functional equation}
f(x^\prime) = f(x^\prime-y^\prime) + h(y^\prime)\ \mbox{ for every } x^\prime, y^\prime \in \mathbb R^{N-1}. 
\end{equation}
Then, $\nabla f(x^\prime) = \nabla f(x^\prime-y^\prime)$ for every $x^\prime, y^\prime \in \mathbb R^{N-1}$ and hence $\nabla f $ must be constant in $\mathbb R^{N-1}$. Namely, $f$ is affine and $\partial\Omega$ must be a hyperplane. When it is assumed only that $f \in C^0(\mathbb R^{N-1})$, without using differentiability of $f$, we can solve \eqref{functional equation} as a functional equation  with the help of continuity of $f$ and we can also  conclude that $f $ is affine.
\par
Thus, set $h_+ = h_+(y^\prime)$ and suppose by contradiction that $h_+ > h(y^\prime)$. Then it follows from \eqref{BCNmodified} and \eqref{touching balls} that there exist $x_0 \in \partial\Omega\cap\partial\Omega_{y^\prime, h_+}$ and $z \in \Gamma \cap \Gamma_{y^\prime,h_+}$ satisfying
$$
\Omega_{y^\prime,h_+} \subsetneqq  \Omega\ \mbox{ and  }\  \partial B_R(z) \cap \partial\Omega\cap\partial\Omega_{y^\prime, h_+} = \{x_0\}.
$$
On the other hand, from the strong comparison principle we have
$$
u(x^\prime-y^\prime,x_N-h_+, t) > u(x,t)\ \mbox{ for every } (x,t) \in \Omega_{y^\prime,h_+}\times (0,\infty).
$$
Therefore, $u(z^\prime-y^\prime,x_N-h_+, t) > u(z,t)$ which contradicts the fact that $z \in \Gamma \cap \Gamma_{y^\prime,h_+}$ and that $\Gamma$ is a stationary level surface of $u$. 
\par
The proof that $h_-(y') = h(y^\prime)$ runs similarly. \end{proof}

%%%%%%%%%%%%%%%%%%%%%%%%%%%%%%%%%%%%%%%%%%%%%%%%%%%%%%
%%%%%%%%%%%%%%%%%%%%%%%%%%%%%%%%%%%%%%%%%%%%%%%%%%%%%%
%%%%%%%%%%%%%%%%%%%%%%%%%%%%%%%%%%%%%%%%%%%%%%%%%%%%%%
%%%%%%%%%%%%%%             Section 3 begins        %%%%%%%%%%%%%%%%%%%%%%%%
%%%%%%%%%%%%%%%%%%%%%%%%%%%%%%%%%%%%%%%%%%%%%%%%%%%%%%
%%%%%%%%%%%%%%%%%%%%%%%%%%%%%%%%%%%%%%%%%%%%%%%%%%%%%%
%%%%%%%%%%%%%%%%%%%%%%%%%%%%%%%%%%%%%%%%%%%%%%%%%%%%%%

\setcounter{equation}{0}
\setcounter{theorem}{0}

\section{Short-time asymptotics and curvature}
\label{section3}
This section is devoted to the proof of Theorem \ref{th:interaction curvatures}. 
We first prove two lemmas in which we construct useful barriers for
problems \eqref{diffusion}-\eqref{initial} and \eqref{cauchy}, respectively.

In the former lemma, we use a result from Atkinson and Peletier \cite{AtP}:  
for every $c > 0$, there exists a unique $C^2$ solution $f_c = f_c(\xi)$ of the problem:
\begin{eqnarray}
&& \left( \phi^\prime(f_c) f_c^\prime\right)^\prime + \frac 12 \xi  f_c^\prime = 0\ \mbox{ in }\ [0,+\infty),
\label{ode selfsimilar}
\\
&& f_c(0) = c,\quad f_c(\xi) \to 0\ \mbox{ as }\ \xi \to +\infty,\label{boundary conditions}
\\
&& f_c^\prime < 0\ \mbox{ in }\ [0,+\infty).
\label{monotonicity}
\end{eqnarray}
Note that, if we put $w(s,t) = f_c\left(t^{-\frac 12}s\right)$ for $ s >0$ and $t > 0$, then $w$ satisfies the one-dimensional problem:
$$
\partial_t w = \partial_s^2 \phi(w)\ \mbox{ in } (0,+\infty)^2,\ w = c\ \mbox{ on }\ \{0\} \times (0,+\infty), \mbox{ and } w = 0\ \mbox{ on } (0,+\infty)\times \{0\}.
$$
%%                      %%
%%  Lemma 3.1 begins. %%
%%                      %%

\begin{lemma}
\label{barriers for IBVP}
Let $\pa\Om$ be bounded and of class $C^2$ and let $\rho_0>0$ be such that the distance function
$d$ belongs to $C^2(\ovr{\Om_{\rho_0}})$ (see \cite{GT}); then, set $\rho_1=\max\{ 2R, \rho_0\}.$
Let $u=u(x,t)$ be  the solution of problem \eqref{diffusion}-\eqref{initial}.
\par 
Then, for every $\veps\in(0,1/4),$ there exist two $C^2$ functions $f_\pm= f_\pm(\xi) :[0,+\infty)\to\RE$ satisfying
\begin{eqnarray}
&&0 < f_\pm(\xi) \le \alpha e^{-\beta\xi^2}\ \mbox{ for every }\xi \in [0,+\infty);\label{exponential decay1}
\\
&&  f_\pm \to f_1 \mbox{ as } \varepsilon \to 0 \mbox{ uniformly on } [0,+\infty),\label{converge to f1}
\end{eqnarray}
where  $\alpha$ and $\beta$ are positive constants independent of $\varepsilon$, and there exists a number $\tau=\tau_\veps>0$ such that
the functions $w_\pm,$ defined by
\begin{equation}
\label{upper and lower solutions}
w_\pm(x,t) = f_\pm\left(t^{-\frac 12}d(x)\right)\ \mbox{ for } (x,t) \in \Omega \times (0,+\infty),
\end{equation}
satisfy the inequalities:
\begin{equation}
\label{important inequalities}
w_- \le u\le w_+\ \mbox{ in }\ \overline{\Om_{\rho_1}}\times (0,\tau].
\end{equation}
\end{lemma}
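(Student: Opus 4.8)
The idea is to take for $w_\pm$ suitable rescalings of the one‑dimensional profiles $f_c$ of \eqref{ode selfsimilar}--\eqref{monotonicity} composed with $d(\cdot)$, and then to invoke the comparison principle (Theorem \ref{th:comparison principle}); the only genuine trouble is that, because of \eqref{infinite speed}, such composed functions are neither sub‑ nor supersolutions in a thin layer at $\partial\Omega$ and $u$ is positive on $\Gamma_{\rho_1}$, so the construction needs extra room there and Theorem \ref{th:nonlinear varadhan}. Concretely, for $\varepsilon\in(0,1/4)$ I would set $f_\pm(\xi)=f_{c_\pm}\!\big(\sqrt{1\mp\varepsilon}\,\xi\big)$ with $c_-:=1-\varepsilon$ and $c_+:=1+\varepsilon$; by the chain rule the rescaling makes $f_\pm$ satisfy $\big(\phi'(f_\pm)f_\pm'\big)'+\tfrac12(1\mp\varepsilon)\,\xi f_\pm'=0$ on $[0,+\infty)$, while $0<f_\pm$, $f_\pm'<0$ and $f_\pm(+\infty)=0$ are inherited from $f_{c_\pm}$. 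Then \eqref{exponential decay1} follows from the asymptotics of $f_c$ at infinity quoted from \cite[Lemma 4, p. 383]{AtP} (see \eqref{limiting behavior}), which yields $f_c(\xi)\le\alpha_0 e^{-\beta_0\xi^2}$ with $\alpha_0,\beta_0$ uniform for $c$ near $1$, so the same bound with $\beta=\tfrac34\beta_0$ holds for all $f_\pm$, $\varepsilon\in(0,1/4)$; and \eqref{converge to f1} follows from the continuous dependence of $f_c$ on $c$ together with this uniform Gaussian decay (uniform convergence to $f_1$ on a large interval $[0,\Xi]$ by continuity, and smallness of $f_\pm$ and $f_1$ beyond $[0,\Xi]$ for $\Xi$ large).

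Next I would compute the defect of $w_\pm(x,t)=f_\pm\!\big(t^{-1/2}d(x)\big)$. Writing $\xi=t^{-1/2}d(x)$ and using $|\nabla d|=1$, at points where $d$ is $C^2$ one gets, after simplifying by the ODE satisfied by $f_\pm$,
$$
\partial_t w_\pm-\Delta\phi(w_\pm)= -\,t^{-1/2}f_\pm'(\xi)\Big[\pm\tfrac{\varepsilon}{2}\,t^{-1}d(x)+\phi'\big(f_\pm(\xi)\big)\,\Delta d\Big].
$$
Here $-t^{-1/2}f_\pm'>0$, $\phi'(f_\pm)\le\delta_2$, and $|\Delta d|$ is bounded on $\overline{\Omega_{\rho_1}}$ by a constant $M$ (by \cite{GT} on $\overline{\Omega_{\rho_0}}$, and on $\{\rho_0\le d\le\rho_1\}$ because there $d$ is locally the minimum of finitely many smooth distance functions with uniformly bounded Laplacians). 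Hence the bracket is $\ge0$ for $w_+$ and $\le0$ for $w_-$ whenever $d(x)\ge C_\varepsilon t$ with $C_\varepsilon:=2\delta_2M/\varepsilon$; i.e.\ $w_+$ is a supersolution and $w_-$ a subsolution of $\partial_t u=\Delta\phi(u)$ on $\{(x,t):d(x)\ge C_\varepsilon t\}$. When $\rho_1>\rho_0$ the cut locus meets $\overline{\Omega_{\rho_1}}$; using $d=\min_i d_i$ near such points, $w_-=\max_i f_-(t^{-1/2}d_i)$ is a maximum of subsolutions, hence itself a subsolution there, while for $w_+$ the supersolution property will only be needed branch by branch.

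Now fix $\tau=\tau_\varepsilon>0$ small, in particular with $C_\varepsilon\tau<\rho_0$. In the layer $D:=\{(x,t)\in\overline{\Omega_{\rho_1}}\times(0,\tau]:d(x)<C_\varepsilon t\}$ we have $\xi<C_\varepsilon\sqrt\tau$, so $w_+(x,t)\ge f_+(C_\varepsilon\sqrt\tau)>1\ge u(x,t)$ for $\tau$ small (as $f_+(0)=1+\varepsilon$); and by the standard parabolic boundary estimate $u(x,t)\ge 1-C_0\,d(x)/\sqrt t$ near $\partial\Omega$ (apply \cite{LSU} to $v=\phi(u)$, which solves $\partial_t v=\phi'(u)\Delta v$, then invert $\phi$) one has in $D$ that $u(x,t)\ge 1-C_0C_\varepsilon\sqrt\tau>1-\varepsilon=f_-(0)\ge w_-(x,t)$ for $\tau$ small; thus $w_-<u<w_+$ on $D$. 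On the lateral surface $\Gamma_{\rho_1}$, a compact subset of $\Omega$, Theorem \ref{th:nonlinear varadhan} gives $-4t\Phi(u(x,t))\to\rho_1^2$ uniformly, whereas \eqref{limiting behavior} and the rescaling give $-4t\Phi(w_\pm(x,t))\to(1\mp\varepsilon)\rho_1^2$ uniformly; since $s\mapsto-4t\Phi(s)$ is decreasing, $w_-<u<w_+$ on $\Gamma_{\rho_1}\times(0,\tau]$ for $\tau$ small. Moreover $w_-=1-\varepsilon<1=u<1+\varepsilon=w_+$ on $\partial\Omega$, and $w_\pm(x,t)\to 0$ as $t\to0^+$ for every $x\in\Omega_{\rho_1}$, matching the initial datum of $u$. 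If $u-w_+$ had a positive maximum over $\overline{\Omega_{\rho_1}}\times(0,\tau]$, by the above it would be attained at a point $(x_0,t_0)$ with $d(x_0)\ge C_\varepsilon t_0$; choosing $i_0$ with $d_{i_0}(x_0)=d(x_0)$ we would then get a positive local maximum of $u-f_+(t^{-1/2}d_{i_0})$, which is impossible because $f_+(t^{-1/2}d_{i_0})$ is a classical supersolution near $(x_0,t_0)$ and the maximum principle (as in the proof of Theorem \ref{th:comparison principle}) rules it out. Hence $u\le w_+$, and symmetrically $u\ge w_-$, which is \eqref{important inequalities}.

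The hard part is exactly the behavior near the parabolic boundary, which is where the loss of finite speed of propagation bites: in \cite{MSprsea} one could choose $\rho,\tau$ so small that $u\equiv0$ on $\Gamma_\rho\times(0,\tau]$, trivializing the lateral boundary, and the barriers were honest sub/supersolutions up to $\partial\Omega$; neither is true here. The remedy is the two concessions above — taking $c_\pm\ne1$ (so that $w_\pm$ is an honest super/subsolution only on $\{d\ge C_\varepsilon t\}$ but dominates, respectively is dominated by, $u$ in the unavoidable layer $\{d<C_\varepsilon t\}$), and using Theorem \ref{th:nonlinear varadhan} together with the Atkinson--Peletier asymptotics to settle the ordering on $\Gamma_{\rho_1}$. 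The cut‑locus points (present only when $\rho_1>\rho_0$) are a lesser nuisance, handled by the $d=\min_i d_i$ representation as in the previous paragraph.
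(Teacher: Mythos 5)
Your construction captures part of the paper's strategy (rescaled Atkinson--Peletier profiles composed with $d$, comparison in a collar, Theorem \ref{th:nonlinear varadhan} on the outer level set), but there are two genuine gaps, and they sit exactly where \eqref{infinite speed} makes the lemma delicate. First, because your $f_\pm$ solve the exactly rescaled ODE, the defect of $w_\pm$ is proportional to $\xi f_\pm'$ and vanishes at $\xi=0$, so $w_\pm$ are super/subsolutions only on $\{d\ge C_\varepsilon t\}$, and you must control $u$ in the residual layer $\{d< C_\varepsilon t\}$. For $w_+$ this is harmless ($u<1$), but for $w_-$ you invoke a ``standard parabolic boundary estimate'' $u\ge 1-C_0\,d(x)/\sqrt t$ attributed to \cite{LSU}. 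No such estimate can simply be cited: it is itself a quantitative lower-barrier statement at a curved boundary, uniform as $t\to0^+$, i.e. essentially the content of the lemma being proved, and any proof of it would again require constructing a subsolution of the very kind under discussion. The paper removes this difficulty at the level of the ODE: its $f_\pm$ are built so that $(\phi'(f_\pm)f_\pm')'+\tfrac12\xi f_\pm'=h_\pm f_\pm'$ with $h_+=-h_-\ge\eta_\varepsilon^2$ everywhere (see \eqref{useful properties}); hence $w_\pm$ are strict super/subsolutions in the whole collar $\Omega_{\rho_0}\times(0,\tau]$, up to $\partial\Omega$, as soon as $\sqrt t\,\delta_2\sup|\Delta d|<\eta_\varepsilon^2$, and on $\partial\Omega$ only the elementary inequalities $f_-(0)<1<f_+(0)$ are needed; no boundary-layer estimate for $u$ enters at all.

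Second, your treatment of $\{\rho_0\le d\le\rho_1\}$ does not work. The constant $M$ bounding $|\Delta d|$ there need not exist: $d$ is only Lipschitz past the cut locus, and even before it $\Delta d=-\sum_j\kappa_j/(1-\kappa_j d)$ tends to $-\infty$ at focal points, which can occur in $\{d\le 2R\}$; since $C_\varepsilon=2\delta_2M/\varepsilon$ depends on $M$, your layer decomposition collapses exactly there. Moreover, the patch you propose for $w_+$ at cut points is logically invalid: since $f_+$ is decreasing, $w_+=\max_i f_+(t^{-1/2}d_i)$ is a \emph{maximum} of supersolutions (the wrong combination; only minima of supersolutions are supersolutions), and a maximum point of $u-w_+$ is not a local maximum point of $u-f_+(t^{-1/2}d_{i_0})$, because the latter dominates the former near that point with equality only at it, so the contradiction you draw does not follow. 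The paper sidesteps all of this: the ordering $w_-<u<w_+$ on $\overline{\Omega_{\rho_1}}\setminus\Omega_{\rho_0}$ is obtained with no PDE comparison whatsoever, directly from the \emph{uniform} convergence of Theorem \ref{th:nonlinear varadhan} on that whole annular set combined with the asymptotics \eqref{limiting behavior} (this is the purpose of the slight rescalings by $\sqrt{1\mp2\eta_\varepsilon}$), and the comparison principle is applied only in $\Omega_{\rho_0}\times(0,\tau]$, where $d\in C^2(\overline{\Omega_{\rho_0}})$ and $\Delta d$ is bounded. Your defect computation and your derivations of \eqref{exponential decay1} and \eqref{converge to f1} are acceptable in spirit, but as written the proof is incomplete at the boundary layer and on the outer region.
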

\begin{proof}
We begin by deriving some properties of the solution $f_c$ 
of problem \eqref{ode selfsimilar}-\eqref{monotonicity}; 
by writing $v_c = v_c(\xi) = \phi\left(f_c(\xi)\right)$ for $\xi \in [0,+\infty)$, we see that
\begin{equation}
\label{equation of v}
\frac {v_c^{\prime\prime}}{v_c^\prime} = - \frac 12 \xi \frac 1{\phi^\prime(f_c)}\ \mbox{ in }\ [0,+\infty).
\end{equation}
\par
With the aid of the last assumption in \eqref{nonlinearity}, integrating \eqref{equation of v} yields that
\begin{equation}
\label{estimate of v prime}
v_c^\prime(0) \exp\left\{ -\frac { \xi^2}{4\delta_2}\right\} \le v_c^\prime(\xi) \le v_c^\prime(0) \exp\left\{ -\frac { \xi^2}{4\delta_1}\right\} < 0\ \mbox{ for every } \xi > 0,
\end{equation}
and hence
\begin{equation}
\label{estimate of f prime}
\frac{v_c^\prime(0)}{\delta_1} \exp\left\{ -\frac { \xi^2}{4\delta_2}\right\} \le f_c^\prime(\xi) \le \frac{v_c^\prime(0)}{\delta_2} \exp\left\{ -\frac { \xi^2}{4\delta_1}\right\} < 0\ \mbox{ for every } \xi > 0.
\end{equation}
Furthermore, by integrating \eqref{estimate of f prime} and using \eqref{boundary conditions}, we have that for every $\xi > 0$
\begin{equation}
\label{estimate of f }
-\frac{v_c^\prime(0)}{\delta_1} \int_\xi^\infty\exp\left\{ -\frac { \eta^2}{4\delta_2}\right\} d\eta\ge f_c(\xi) \ge -\frac{v_c^\prime(0)}{\delta_2}\int_\xi^\infty \exp\left\{ -\frac { \eta^2}{4\delta_1}\right\} d\eta.
\end{equation}
Thus, with the aid of \eqref{estimate of v prime} and \eqref{estimate of f }, by integrating \eqref{ode selfsimilar}, we have:
\begin{equation}
 -v_c^\prime(0) = \frac 12\int_0^\infty f_c(\xi)\ d\xi\ \mbox{ for } \ c > 0.\label{integral formula for derivative of v}
 \end{equation}
Moreover,  a comparison argument will give us
\begin{eqnarray}
&&0 <  f_{c_1} < f_{c_2}\ \mbox{  on } \ [0,+\infty) \ \mbox{ if }\ 0 < c_1 < c_2 < +\infty;\label{monotonicity of f with parameter}
\\
&& 0 > v_{c_1}^\prime(0) >  v_{c_2}^\prime(0) \ \mbox{ if }\ 0 < c_1 < c_2 < +\infty. \label{monotonicity of derivatives of v}
\end{eqnarray}
In the Appendix, we will give a proof of \eqref{integral formula for derivative of v}-\eqref{monotonicity of derivatives of v}.
\par
Furthermore,  \cite[Lemma 4, p. 383]{AtP} tells us that, for every compact interval $I$ contained in  $(0, +\infty)$,  
\begin{equation}
\label{limiting behavior}
\frac {-4\Phi(f_c(\xi))}{\xi^2} \to 1\ \mbox{ as }\ \xi \to +\infty\ \mbox{ uniformly for }\ c \in I.
\end{equation}

Let $0< \varepsilon < \frac 14$. Then, by continuity 
we can find a sufficiently small $0 < \eta_\varepsilon << \varepsilon$ and two $C^2$ functions $f_{\pm} = f_{\pm}(\xi)$ for $\xi \ge 0$ satisfying:
\begin{eqnarray*}
&& f_\pm(\xi) = f_{1\pm \varepsilon}\left(\sqrt{1\mp 2\eta_\varepsilon}\ \xi\right)\ \mbox{ if } \xi \ge \eta_\varepsilon; \label{scaling of f}
\\
&&f_\pm^\prime < 0\ \mbox{ in } [0,+\infty);
\label{monotonicity both}
\\
&& f_- < f_1 < f_+\ \mbox{ in } [0,+\infty);
\label{from above and below}
\\
&& \left(\phi^\prime(f_\pm)f_\pm^\prime\right)^\prime + \frac 12\xi f_\pm^\prime = h_\pm(\xi) f_\pm^\prime\ \mbox{ in } [0,+\infty),\label{ modified ode}
\end{eqnarray*}
where $h_\pm = h_\pm(\xi)$ are defined by
\begin{equation*}
\label{definition of h}
h_\pm(\xi) = \left\{\begin{array}{rl}   \pm \eta_\varepsilon\xi\ &\mbox{ if }\ \xi \ge \eta_\varepsilon,\\
  \pm\eta_\varepsilon^2\ &\mbox{ if }\ \xi \le \eta_\varepsilon.
\end{array}\right.
\end{equation*}
(Here, in order to use the functions $h_\pm$ also in Lemma \ref{barriers for Cauchy} later, we defined $h_\pm(\xi)$ for all $\xi \in \mathbb R$.)
It is important to notice that 
\begin{equation}
\label{useful properties}
h_+=-h_- \ge \eta_\varepsilon^2\  \mbox{ on } \mathbb R.
\end{equation}
Moreover, \eqref{converge to f1} follows directly from the above construction of $f_\pm$, and \eqref{estimate of f } together with \eqref{monotonicity of f with parameter} yields  \eqref{exponential decay1}.

Set $\Psi = \Phi^{-1}$. Then it follows from \eqref{limiting behavior} that there exists $\xi_\varepsilon > 1$ such that
\begin{equation}
\label{inequality involving Psi}
\Psi\left(-\frac {\xi^2}4\left(1-\frac{\eta_\varepsilon}2\right)\right) > f_c(\xi) > \Psi\left(-\frac {\xi^2}4\left(1+\frac {\eta_\varepsilon}2\right)\right) \ \mbox{ for } \xi \ge \xi_\varepsilon \mbox{ and } c \in I_\varepsilon,
\end{equation}
where we set $I_\varepsilon=[1-2\varepsilon, 1+2\varepsilon]$. 

Since $\partial\Omega$ is bounded and of class $C^2$, 
Theorem \ref{th:nonlinear varadhan} yields that
\begin{equation}
\label{application of varadhan}
-4t\Phi(u(x,t)) \to d(x)^2\ \mbox{ as } t \to 0^+\ \mbox{ uniformly on } \overline{\Omega_{\rho_1}}\setminus \Omega_{\rho_0}.
\end{equation}
Then there exists $\tau_{1,\varepsilon} > 0$ such that for every $t \in (0,\tau_{1,\varepsilon}]$ and every $x \in \overline{\Omega_{\rho_1}}\setminus \Omega_{\rho_0}$
$$
\left| -4t\Phi(u(x,t)) - d(x)^2\right| < \frac 12\eta_\varepsilon\rho_0^2 \le \frac 12\eta_\varepsilon d(x)^2,
$$
which implies that 
\begin{equation}
\label{inequality inside}
\Psi\left(-\frac {\left(1-\frac 12\eta_\varepsilon\right)}4 \frac {d(x)^2}t \right)> u(x,t) > \Psi\left(-\frac {\left(1+\frac 12\eta_\varepsilon\right)}4 \frac {d(x)^2}t \right),
\end{equation}
for every $t \in (0,\tau_{1,\varepsilon}]$ and every $x \in \overline{\Omega_{\rho_1}}\setminus \Omega_{\rho_0}.$
\par
From \eqref{inequality involving Psi}, we have
\begin{eqnarray}
&&f_+(\xi) = f_{1+\varepsilon}(\sqrt{1-2\eta_\varepsilon}\ \xi) > \Psi\left(-\frac{\xi^2}4\left(1-\frac{\eta_\varepsilon}2\right)\right) \mbox{ if } \xi \ge \frac {\xi_\varepsilon}{\sqrt{1-2\eta_\varepsilon}};\label{f plus}
\\
&&f_-(\xi) = f_{1-\varepsilon}(\sqrt{1+2\eta_\varepsilon}\ \xi) < \Psi\left(-\frac{\xi^2}4\left(1+\frac{\eta_\varepsilon}2\right)\right) \mbox{ if } \xi \ge \frac {\xi_\varepsilon}{\sqrt{1+2\eta_\varepsilon}}.\label{f minus}
\end{eqnarray}
\par
Now, consider the two functions $w_\pm = w_\pm(x,t)$ defined by \eqref{upper and lower solutions}.
It follows from \eqref{inequality inside}, \eqref{f plus} and \eqref{f minus} that there exists $\tau_{2,\varepsilon} \in (0, \tau_{1,\varepsilon}]$ satisfying
\begin{equation}
\label{estimate inside}
w_- < u < w_+\ \mbox{ in } \left(\overline{\Omega_{\rho_1}}\setminus\Omega_{\rho_0}\right) \times (0,\tau_{2,\varepsilon}].
\end{equation}
Since $d \in C^2(\overline{\Omega_{\rho_0}})$ and $|\nabla d| = 1$ in $\overline{\Omega_{\rho_0}}$, we have
\begin{equation*}
\label{application of parabolic operator}
\partial_t w_\pm-\Delta\phi(w_\pm) = -f_\pm^\prime t^{-1}\left\{ h_\pm+ \sqrt{t}\ \phi^\prime(f_\pm)\Delta d\right\}\ \mbox{ in }\ \overline{\Omega_{\rho_0}} \times (0, +\infty).
\end{equation*}
Therefore, it follows from  \eqref{useful properties} that there exists $\tau_{3,\varepsilon} \in (0, \tau_{2,\varepsilon}]$ satisfying
\begin{equation*}
\label{differential inequalities}
\partial_t w_- - \Delta\phi(w_-) < 0 < \partial_t w_+ - \Delta\phi(w_+)\ \mbox{ in }\ \Omega_{\rho_0}\times (0,\tau_{3,\varepsilon}].
\end{equation*}
Observe that
\begin{eqnarray*}
&& w_-=u=w_+ = 0\ \mbox{ in }\ \Omega_{\rho_0} \times \{0\},\label{comparison initial}
\\
&& w_- =f_-(0) < 1=f_1(0) =u < f_+(0)=w_+\ \mbox{ on }\ \partial\Omega \times (0,\tau_{3,\varepsilon}],\label{comparison outside boundary}
\\
&&w_- < u < w_+\ \mbox{ on }\ \Gamma_{\rho_0} \times (0,\tau_{3,\varepsilon}].\label{comparison inside boundary}
\end{eqnarray*}
Note that the last inequalities above come from \eqref{estimate inside}. 
\par
Thus, \eqref{important inequalities} holds true with $\tau=\tau_{3,\varepsilon},$ 
by the comparison principle and \eqref{estimate inside}.
\end{proof}
\vskip.2cm
In the next lemma, instead of \eqref{ode selfsimilar}-\eqref{monotonicity}, we will work with 
the following problem:
\begin{eqnarray}
&& \left( \phi^\prime(f_c) f_c^\prime\right)^\prime + \frac 12 \xi  f_c^\prime = 0\ \mbox{ in }\ \mathbb R,\label{ode selfsimilar on whole line}
\\
&& f_c(\xi) \to c\ \mbox{ as }\ \xi \to -\infty,\quad f_c(\xi) \to 0\ \mbox{ as }\ \xi \to +\infty,\label{boundary conditions on whole line}
\\
&& f_c^\prime < 0\ \mbox{ in }\ \mathbb R.\label{monotonicity on whole line}
\end{eqnarray}
In the Appendix we will prove that, for every $c > 0$, 
\eqref{ode selfsimilar on whole line}-\eqref{monotonicity on whole line} has 
a unique $C^2$ solution $f_c = f_c(\xi).$ 
Note that, if we put $w(s,t) = f_c\left(t^{-\frac 12}s\right)$ for $ s \in \mathbb R$ and $t > 0$, then $w$ satisfies the one-dimensional  initial value problem:
$$
\partial_t w = \partial_s^2 \phi(w)\ \mbox{ in } \mathbb R \times (0,+\infty) \ \mbox{ and }\ w = c \chi_{(-\infty,0]}\ \mbox{ on } \mathbb R\times\{0\}.
$$
\par
Also, let us consider the signed distance function $d^*= d^*(x)$ 
of $x \in \mathbb R^N$ to the boundary $\partial\Omega$ defined by
\begin{equation*}
\label{signed distance}
d^*(x) = \left\{\begin{array}{rll}
 \mbox{ dist}(x,\partial\Omega)\ &\mbox{ if }\ x \in \Omega,
\\
-\mbox{ dist}(x,\partial\Omega)\ &\mbox{ if  }\ x \not\in \Omega.
\end{array}\right.
\end{equation*}
If $\partial\Omega$  is bounded and of class $C^2$,  there exists a number $\rho_0 > 0$ such that $d^*(x)$ is $C^2$-smooth on a compact neighborhood  $\mathcal N$ of the boundary $\partial\Omega$ given by
\begin{equation*}
\label{neighborhood of boundary from both sides}
\mathcal N = \{ x \in \mathbb R^N : -\rho_0 \le d^*(x) \le \rho_0 \}.
\end{equation*}
For simplicity we have used the same letter $\rho_0 >0$ as in  Lemma \ref{barriers for IBVP}.
%%                      %%
%%  Lemma 3.2 begins. %%
%%                      %%

\begin{lemma}
\label{barriers for Cauchy}
Let $\pa\Om$ be bounded and of class $C^2,$ set $\rho_1=\max\{2R, \rho_0\}$
and let $u=u(x,t)$ be  the solution of problem  {\rm (\ref{cauchy})}.
\par 
Then, for every $\veps\in(0,1/4),$ there exist  two $C^2$ functions $f_\pm= f_\pm(\xi) : \mathbb R \to\RE$ satisfying
\begin{eqnarray}
&&0 < f_\pm(\xi) \le \alpha e^{-\beta\xi^2}\ \mbox{ for every }\xi \in [0,+\infty);\label{exponential decay2}
\\
&&  f_\pm \to f_1 \mbox{ as } \varepsilon \to 0 \mbox{ uniformly on } [0,+\infty),\label{converge to f12}
\end{eqnarray}
where  $\alpha$ and $\beta$ are positive constants independent of $\varepsilon$, and there exists a number $\tau=\tau_\veps>0$ such that
the functions $w_\pm,$ defined by
\begin{equation}
\label{sub and super solution Cauchy}
w_\pm(x,t) = f_\pm\left(t^{-\frac12} d^*(x)\right)\ \mbox{ for }\  (x,t) \in \mathbb R^N \times (0,+\infty),
\end{equation}
satisfy the inequalities:
\begin{equation}
\label{important inequalities cauchy}
w_- \le u\le w_+\ \mbox{ in }\ \overline{\mathcal N \cup \Omega_{\rho_1}}\times (0,\tau].
\end{equation}
\end{lemma}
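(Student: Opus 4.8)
The plan is to reproduce the scheme of Lemma \ref{barriers for IBVP}, with the distance function $d$ replaced by the signed distance $d^*$ and the one-sided collar of $\partial\Omega$ replaced by the two-sided one, $\mathcal N=\{|d^*|\le\rho_0\}$, on which $d^*\in C^2$ and $|\nabla d^*|=1$. The first step is to transfer to the whole-line profiles $f_c$ of \eqref{ode selfsimilar on whole line}--\eqref{monotonicity on whole line} the properties of the half-line profiles used in Lemma \ref{barriers for IBVP}. This is easy: the restriction of $f_c$ to $[0,+\infty)$ solves \eqref{ode selfsimilar}--\eqref{monotonicity} with the constant $c$ replaced by $f_c(0)\in(0,c)$, so the Gaussian estimate \eqref{estimate of f } and the limiting behaviour \eqref{limiting behavior} hold for $f_c$ on $[0,+\infty)$, uniformly for $c$ in compact subintervals of $(0,+\infty)$, once one knows --- as will be shown in the Appendix, along with the existence and uniqueness of $f_c$ --- that $c\mapsto f_c(0)$ is continuous and, by a comparison argument, strictly increasing, with $0<f_{c_1}<f_{c_2}$ on $\mathbb R$ whenever $0<c_1<c_2$, exactly as in \eqref{monotonicity of f with parameter}.

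The second step is the construction of the barrier profiles $f_\pm$ on all of $\mathbb R$. For $\varepsilon\in(0,1/4)$, set $I_\varepsilon=[1-2\varepsilon,1+2\varepsilon]$ and pick $0<\eta_\varepsilon\ll\varepsilon$; as in Lemma \ref{barriers for IBVP}, put $f_\pm(\xi)=f_{1\pm\varepsilon}(\sqrt{1\mp2\eta_\varepsilon}\,\xi)$ for $\xi\ge\eta_\varepsilon$, and extend $f_\pm$ to $\xi\le\eta_\varepsilon$ by continuing it backwards as the unique $C^2$ solution of the modified equation $(\phi'(f_\pm)f_\pm')'+\tfrac12\xi f_\pm'=h_\pm(\xi)f_\pm'$ with matching data at $\xi=\eta_\varepsilon$ coming from the scaled profile; here $h_\pm$ are the functions already defined on all of $\mathbb R$ in Lemma \ref{barriers for IBVP}, so that $h_+=-h_-\ge\eta_\varepsilon^2$ on $\mathbb R$ by \eqref{useful properties}. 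For $\eta_\varepsilon$ small enough, the resulting $f_\pm$ are $C^2$ and strictly decreasing on $\mathbb R$, tend to $0$ at $+\infty$, and admit finite limits $c_\pm:=\lim_{\xi\to-\infty}f_\pm(\xi)$ with $c_-<1<c_+$; this last point is what distinguishes the present construction, and it follows because, $\varepsilon$ being fixed, $f_\pm\to f_{1\pm\varepsilon}$ uniformly on $(-\infty,0]$ as $\eta_\varepsilon\to0$ (continuous dependence on $\eta_\varepsilon$, made uniform by the super-exponential decay of $f_\pm'$ as $\xi\to-\infty$ obtained by integrating \eqref{equation of v}-type identities), so $c_\pm\to1\pm\varepsilon$. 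Then \eqref{exponential decay2} follows from \eqref{estimate of f } and \eqref{monotonicity of f with parameter} with $\alpha,\beta$ independent of $\varepsilon$ (using $\varepsilon<1/4$ and \eqref{monotonicity of derivatives of v}), and \eqref{converge to f12} is immediate. Finally, with $\Psi=\Phi^{-1}$, for $\xi$ large the scaled profiles $f_\pm(\xi)$ are trapped between $\Psi\!\left(-\tfrac{\xi^2}{4}\bigl(1\mp\tfrac{\eta_\varepsilon}{2}\bigr)\right)$ exactly as in Lemma \ref{barriers for IBVP}, using \eqref{limiting behavior} for $c\in I_\varepsilon$.

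The third step is the comparison, on the set $\overline{\mathcal N\cup\Omega_{\rho_1}}=\{-\rho_0\le d^*\le\rho_1\}=\mathcal N\cup\{\rho_0\le d^*\le\rho_1\}$. Write $w_\pm(x,t)=f_\pm(t^{-1/2}d^*(x))$. On the compact set $\{\rho_0\le d^*\le\rho_1\}\subset\Omega$, where $d^*=d$, Theorem \ref{th:nonlinear varadhan} applied in $\Omega$ gives $-4t\Phi(u(x,t))\to d(x)^2$ uniformly, and combining this with the bounds on $f_\pm$ just recalled, exactly as in the derivation of \eqref{estimate inside} in Lemma \ref{barriers for IBVP}, yields $w_-<u<w_+$ on $\{\rho_0\le d^*\le\rho_1\}\times(0,\tau]$ for some $\tau=\tau_\varepsilon>0$. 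On the compact set $\{d^*=-\rho_0\}\subset\mathrm{int}\,\Omega^c$ we invoke the new ingredient: $u(x,t)\to1$ uniformly there as $t\to0^+$, the analogue of \eqref{varadhan formula} for $1-u$. Indeed $v:=1-u$ solves a Cauchy problem of the same type, $\partial_t v=\Delta\tilde\phi(v)$ with $\tilde\phi(s)=\phi(1)-\phi(1-s)$ satisfying \eqref{nonlinearity} and $v=\chi_\Omega$ at $t=0$, on the $C^2$ domain $\mathrm{int}\,\Omega^c$, so Theorem \ref{th:nonlinear varadhan} applies and gives $v(x,t)\to0$, i.e. $u(x,t)\to1$, uniformly on $\{d^*=-\rho_0\}$; since $0<u<1$ while $w_\pm(x,t)=f_\pm(-\rho_0\,t^{-1/2})\to c_\pm$ with $c_-<1<c_+$ as $t\to0^+$, shrinking $\tau$ gives $w_-<u<w_+$ on $\{d^*=-\rho_0\}\times(0,\tau]$ too. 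On $\mathcal N$, where $d^*\in C^2$ and $|\nabla d^*|=1$, the same computation as in Lemma \ref{barriers for IBVP} (with $d^*$ in place of $d$) gives
$$\partial_t w_\pm-\Delta\phi(w_\pm)=-f_\pm'(\xi)\,t^{-1}\bigl\{h_\pm(\xi)+\sqrt t\,\phi'(f_\pm(\xi))\,\Delta d^*(x)\bigr\},\qquad\xi=t^{-1/2}d^*(x),$$
so that, since $-f_\pm'>0$, $\pm h_\pm\ge\eta_\varepsilon^2>0$ by \eqref{useful properties}, and $\Delta d^*$ is bounded on the compact $\mathcal N$, after one more shrinking of $\tau$ we get $\partial_t w_--\Delta\phi(w_-)<0<\partial_t w_+-\Delta\phi(w_+)$ in $\mathrm{int}\,\mathcal N\times(0,\tau]$. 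The parabolic boundary of $\mathcal N$ is $\{d^*=\pm\rho_0\}\times(0,\tau]$ together with $\mathcal N\times\{0\}$: on the former $w_-\le u\le w_+$ by the two estimates just obtained; on the latter (off the negligible set $\partial\Omega$) $w_\pm=u=0$ where $d^*>0$, while $w_-\to c_-<1=u$ and $w_+\to c_+>1=u$ as $t\to0^+$ where $d^*<0$. The comparison principle (Theorem \ref{th:comparison principle}) then gives $w_-\le u\le w_+$ on $\mathcal N\times(0,\tau]$, and together with the estimate on $\{\rho_0\le d^*\le\rho_1\}\times(0,\tau]$ this is \eqref{important inequalities cauchy} with $\tau=\tau_\varepsilon$.

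I expect the main obstacle to be the second step --- specifically, verifying that the backwards continuation of $f_\pm$, which is rigidly determined by the matching data at $\xi=\eta_\varepsilon$ and by the requirement that the modified equation with the prescribed $h_\pm$ hold on all of $\mathbb R$, really produces strictly decreasing $C^2$ profiles whose limits $c_\pm$ at $-\infty$ straddle $1$. This rests on continuous dependence of the solutions of the modified ODE on $\eta_\varepsilon$ over the unbounded interval $(-\infty,0]$, for which the Gaussian decay of $f_\pm'$ at $-\infty$ (as in \eqref{estimate of v prime}) is essential. A secondary but genuinely new point is the exterior estimate on $\{d^*=-\rho_0\}$, which forces one to bring in the behaviour of $u$ in $\mathrm{int}\,\Omega^c$ through the $u\mapsto1-u$ symmetry; everything else is a routine transcription of Lemma \ref{barriers for IBVP}.
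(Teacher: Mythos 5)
Your proof is correct and follows the paper's overall scheme: the same barriers $w_\pm=f_\pm(t^{-1/2}d^*)$ built from the whole-line profiles of \eqref{ode selfsimilar on whole line}--\eqref{monotonicity on whole line} scaled as in \eqref{scaling of f on whole line}, the same use of \eqref{limiting behavior} and Theorem \ref{th:nonlinear varadhan} to get the analogue of \eqref{estimate inside} on $\overline{\Omega_{\rho_1}}\setminus\Omega_{\rho_0}$, and the comparison principle applied on $\mathcal N$. The two places where you deviate are local. First, for the crucial new fact that the limits $f_\pm(-\infty)$ straddle $1$, the paper does not pass to the limit $\eta_\varepsilon\to0$: in the Appendix it observes that the shifted functions $\tilde f_\pm(\xi)=f_\pm(\xi\pm2\eta_\varepsilon^2)$ solve the \emph{unmodified} ODE on $(-\infty,0]$ and sandwiches them between the neighbouring whole-line profiles $f_{1\pm\frac12\varepsilon}$ and $f_{1\pm\frac32\varepsilon}$ by the same integration-by-parts/uniqueness comparison used for \eqref{monotonicity of f with parameter cauchy}; this yields \eqref{4 inequalities}, hence $1\pm\frac12\varepsilon\le f_\pm(-\infty)\le1\pm\frac32\varepsilon$ and also the pointwise bounds $f_-<f_1<f_+$ in \eqref{from above and below on whole line}, with no continuity-in-$\eta_\varepsilon$ argument and no need for uniform control on an unbounded interval. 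Your continuous-dependence route can be made to work, but exactly as you flag it hinges on the uniform Gaussian tail bound for $f_\pm'$ at $-\infty$ (uniform in $\eta_\varepsilon$, obtained by integrating the \eqref{equation of v}-type identity for the shifted equation) to pass from convergence on compacts to convergence of the limits $c_\pm\to1\pm\varepsilon$; the paper's sandwich is shorter and gives the needed bounds directly. Second, on the outer boundary $\{d^*=-\rho_0\}$ the paper simply attributes $w_-<u<w_+$ to the first formula of \eqref{from above and below on whole line}, leaving implicit the fact that $u\to1$ uniformly on that compact subset of the interior of $\Omega^c$ (needed for the lower inequality, since $w_-$ there is close to $f_-(-\infty)\approx1-\varepsilon$); your argument via $v=1-u$, which solves the Cauchy problem for $\tilde\phi(s)=\phi(1)-\phi(1-s)$ with initial datum $\chi_\Omega$, together with Theorem \ref{th:nonlinear varadhan} applied in $\mathbb R^N\setminus\overline\Omega$, is a clean and correct way to supply this detail (a cheaper comparison with radial subsolutions would also do). Neither difference affects correctness; your omission of the pointwise sandwich $f_-<f_1<f_+$ on $(-\infty,0]$ is harmless, since only the straddling of the limits is used in the comparison step.
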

\begin{proof}
Let $f_c$ be the solution of problem \eqref{ode selfsimilar on whole line}-\eqref{monotonicity on whole line}; by writing $v_c = v_c(\xi) = \phi\left(f_c(\xi)\right)$ for $\xi \in \mathbb R$, we have:
\begin{eqnarray}
&& -v_c^\prime(0) = \frac 12\int_0^\infty f_c(\xi)\ d\xi\ \mbox{ for } \ c > 0;\label{integral formula for derivative of v cauchy}
\\
&& 0 <  f_{c_1} < f_{c_2}\ \mbox{  in } \ \mathbb R \ \mbox{ if }\ 0 < c_1 < c_2 < +\infty;\label{monotonicity of f with parameter cauchy}
\\
&& 0 > v_{c_1}^\prime(0) >  v_{c_2}^\prime(0) \ \mbox{ if }\ 0 < c_1 < c_2 < +\infty. \label{ monotonicity of derivatives of v cauchy}
\end{eqnarray}
In the Appendix we will give a proof of \eqref{integral formula for derivative of v cauchy}-\eqref{ monotonicity of derivatives of v cauchy}.
Then \cite[Lemma 4, p. 383]{AtP} tells us that \eqref{limiting behavior} also holds for the solution $f_c$ of this problem.

Let $0 < \varepsilon < \frac 14$. 
We can find a sufficiently small $0 < \eta_\varepsilon << \varepsilon$ and two $C^2$ functions $f_{\pm} = f_{\pm}(\xi)$ for $\xi  \in \mathbb R$ satisfying:
\begin{eqnarray}
&& f_\pm(\xi) = f_{1\pm\varepsilon} \left(\sqrt{1\mp 2\eta_\varepsilon}\ \xi\right)\ \mbox{ if } \xi \ge \eta_\varepsilon; \label{scaling of f on whole line}
\\
&&f_\pm^\prime < 0\ \mbox{ in } \mathbb R;\label{monotonicity both on whole line}
\\
&& f_-(-\infty) < 1= f_1(-\infty) < f_+(-\infty)\ \mbox{ and }\ f_- < f_1< f_+\ \mbox{ in } \mathbb R;\label{from above and below on whole line}
\\
&& \left(\phi^\prime(f_\pm)f_\pm^\prime\right)^\prime + \frac 12\xi f_\pm^\prime = h_\pm(\xi) f_\pm^\prime\ \mbox{ in } \mathbb R.\label{ modified ode on whole line}
\end{eqnarray}
In the Appendix we will prove \eqref{from above and below on whole line} by choosing $\eta_\varepsilon>0$ sufficiently small. 

Here, we also have \eqref{useful properties}, \eqref{exponential decay2}, and \eqref{converge to f12}.
Moreover,  
it follows from \eqref{limiting behavior} that there exists $\xi_\varepsilon > 1$ satisfying \eqref{inequality involving Psi}.
Proceeding similarly yields  \eqref{application of varadhan}, \eqref{inequality inside}, \eqref{f plus} and \eqref{f minus}. 
\par
Now, consider the functions $w_\pm$ defined by \eqref{sub and super solution Cauchy}.
Then we also have \eqref{estimate inside} and, 
since $d^* \in C^2(\mathcal N)$ and $|\nabla d^*| = 1$ in $\mathcal N$, we obtain that
\begin{equation*}
\label{application of parabolic operator cauchy}
\partial_t w_\pm-\Delta\phi(w_\pm) = -f_\pm^\prime t^{-1}\left\{ h_\pm+ \sqrt{t}\ \phi^\prime(f_\pm)\Delta d^*\right\}\ \mbox{ in }\ \mathcal N \times (0, +\infty).
\end{equation*}
Therefore, it follows from  \eqref{useful properties} that there exists $\tau_{3,\varepsilon} \in (0, \tau_{2,\varepsilon}]$ satisfying:
\begin{eqnarray*}
&&\partial_t w_- - \Delta\phi(w_-) < 0 < \partial_t w_+ - \Delta\phi(w_+)\ \mbox{ in }\ \mathcal N\times (0,\tau_{3,\varepsilon}],\label{differential inequalities cauchy}
\\
&& w_-\le u \le w_+ \ \mbox{ in }\ \mathcal N \times \{0\},\label{comparison initial cauchy}
\\
&& w_- < u < w_+\ \mbox{ on }\ \partial\mathcal N \times (0,\tau_{3,\varepsilon}].\label{comparison outside boundary cauchy} 
\end{eqnarray*}
Note that, in the last inequalities, the ones on $\Gamma_{\rho_0} \times (0,\tau_{3,\varepsilon}]$  come from \eqref{estimate inside} and the others on $\left(\partial\mathcal N \setminus \Gamma_{\rho_0}\right)\times (0,\tau_{3,\varepsilon}]$ come from the former formula of \eqref{from above and below on whole line}. 
\par
Thus, \eqref{important inequalities cauchy} follows, with $\tau=\tau_{3,\veps},$ from the comparison principle and \eqref{estimate inside}.
\end{proof}
\vskip.2cm

In the proof of Theorem \ref{th:interaction curvatures}, 
we will also use a geometric lemma from \cite{MSprsea} adjusted to our situation.
%%                      %%
%%  Lemma 3.3 begins. %%
%%                      %%

\begin{lemma}{\rm (\cite[Lemma 2.1, p. 376]{MSprsea})}
\label{lm:asympvol}
Let $\kappa_j(y_0)<\frac1{R}$ for every $j=1,\dots,N-1.$ Then we have:
\begin{equation*}
\label{asympvol}
\lim_{s\to 0^+} s^{-\frac{N-1}{2}} \mathcal H^{N-1}(\Gamma_s\cap B_R(x_0))=2^{\frac{N-1}{2}}\omega_{N-1} \
\left\{\prod_{j=1}^{N-1}\left(\frac1{R}-\kappa_j(y_0)\right)\right\}^{-\frac12},
\end{equation*}
where $\mathcal H^{N-1}$ is the standard $(N-1)$-dimensional Hausdorff measure, and $\omega_{N-1}$ is the volume of the unit ball in $\mathbb R^{N-1}.$
\end{lemma}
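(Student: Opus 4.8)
The plan is to reduce the computation to a small neighborhood of $y_0$ and then carry it out explicitly after a parabolic rescaling. Since $B_R(x_0)\subset\Omega$ and $\overline{B_R(x_0)}\cap\partial\Omega=\{y_0\}$, the center must lie on the inward normal ray at $y_0$, so $x_0=y_0+R\nu$ with $\nu$ the inward unit normal to $\partial\Omega$ at $y_0$. Moreover $d>0$ on the compact set $\overline{B_R(x_0)}\setminus\{y_0\}$, so for every neighborhood $U$ of $y_0$ there is $s_U>0$ with $\Gamma_s\cap B_R(x_0)\subset U$ whenever $0<s<s_U$; hence it suffices to work in an arbitrarily small coordinate patch around $y_0$. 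Fix Euclidean coordinates so that $y_0=0$, $\nu=e_N$, and near $0$ the boundary is the graph $\partial\Omega=\{x_N=\psi(x')\}$ with $\psi\in C^2$, $\psi(0)=0$, $\nabla\psi(0)=0$ and, after a rotation in the $x'$ variables, $D^2\psi(0)=\mathrm{diag}\bigl(\kappa_1(y_0),\dots,\kappa_{N-1}(y_0)\bigr)$; then $x_0=Re_N$.

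For $s$ small the part of $\Gamma_s$ lying near $y_0$ is parametrized by the nearest-point map
\[
p'\ \longmapsto\ X_s(p')=(p',\psi(p'))+s\,n(p'),\qquad n(p')=\frac{(-\nabla\psi(p'),\,1)}{\sqrt{1+|\nabla\psi(p')|^2}},
\]
which is $C^1$ and whose $(N-1)$-dimensional surface Jacobian is $J_s(p')=\sqrt{1+|\nabla\psi(p')|^2}\,\prod_{j=1}^{N-1}\bigl(1-s\,\kappa_j(p')\bigr)$, where $\kappa_j(p')$ is the $j$-th principal curvature of $\partial\Omega$ at the point $(p',\psi(p'))$. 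Because $\partial\Omega$ is of class $C^2$, the curvatures $\kappa_j(\cdot)$ are continuous, so $J_s(p')\to 1$ as $(p',s)\to(0,0)$, uniformly.

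The core of the argument is the inclusion condition $X_s(p')\in B_R(x_0)$. Expanding $\psi$ to second order at $0$ in $|X_s(p')-Re_N|^2-R^2$ and then setting $p'=\sqrt{s}\,q'$ gives
\[
|X_s(\sqrt{s}\,q')-Re_N|^2-R^2=sR\Bigl(\ \sum_{j=1}^{N-1}\Bigl(\tfrac1R-\kappa_j(y_0)\Bigr)q_j^2\ -\ 2\ \Bigr)+o(s)
\]
uniformly for $q'$ in bounded subsets of $\mathbb R^{N-1}$. A cruder form of the same expansion, using $\mu:=\min_j\bigl(1-R\,\kappa_j(y_0)\bigr)>0$ (this is exactly where the strict inequalities $\kappa_j(y_0)<1/R$ enter), gives $|X_s(p')-Re_N|^2-R^2\ge \tfrac{\mu}{2}|p'|^2-2Rs$ for $(p',s)$ small; hence the parameter domain $D_s:=\{p':X_s(p')\in B_R(x_0)\}$ is contained in the ball $\{|p'|\le C_1\sqrt{s}\}$, so the rescaled domain $\widetilde D_s:=s^{-1/2}D_s$ stays inside a fixed ball of $\mathbb R^{N-1}$ and is squeezed between the ellipsoids $\{\,\sum_j(\tfrac1R-\kappa_j(y_0))q_j^2<2\mp\varepsilon(s)\,\}$ with $\varepsilon(s)\to0$. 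In particular $\chi_{\widetilde D_s}\to\chi_E$ boundedly and a.e., where $E=\{q'\in\mathbb R^{N-1}:\sum_j(\tfrac1R-\kappa_j(y_0))q_j^2<2\}$.

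Finally, writing the Hausdorff measure as a surface integral and changing variables $p'=\sqrt{s}\,q'$,
\[
\mathcal H^{N-1}\bigl(\Gamma_s\cap B_R(x_0)\bigr)=\int_{D_s}J_s(p')\,dp'=s^{\frac{N-1}{2}}\int_{\widetilde D_s}J_s(\sqrt{s}\,q')\,dq',
\]
and since $J_s(\sqrt{s}\,q')\to1$ uniformly on the fixed ball while $\chi_{\widetilde D_s}\to\chi_E$, dominated convergence gives
\[
\lim_{s\to0^+}s^{-\frac{N-1}{2}}\mathcal H^{N-1}\bigl(\Gamma_s\cap B_R(x_0)\bigr)=|E|=2^{\frac{N-1}{2}}\omega_{N-1}\Bigl\{\prod_{j=1}^{N-1}\Bigl(\tfrac1R-\kappa_j(y_0)\Bigr)\Bigr\}^{-\frac12},
\]
the last equality being the elementary volume of the ellipsoid $E$ (substitute $r_j=\sqrt{(\tfrac1R-\kappa_j(y_0))/2}\ q_j$). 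This is the asserted limit. I expect the only genuinely delicate points to be the uniform trapping of the rescaled domains $\widetilde D_s$ inside a fixed compact set — which relies precisely on the strict inequalities $\kappa_j(y_0)<1/R$ — and the uniform convergence $J_s\to1$, which uses only the $C^2$ regularity of $\partial\Omega$; everything else is bookkeeping in a Taylor expansion.
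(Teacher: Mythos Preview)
Your argument is correct, and there is nothing in this paper to compare it against: Lemma~\ref{lm:asympvol} is quoted from \cite[Lemma~2.1]{MSprsea} and is not re-proved here. The route you take---localizing near $y_0$, writing $\partial\Omega$ as a graph over its tangent plane, parametrizing $\Gamma_s$ by the normal map with Jacobian $\sqrt{1+|\nabla\psi|^2}\prod_j(1-s\kappa_j)$, performing the parabolic rescaling $p'=\sqrt{s}\,q'$, and identifying the limiting parameter domain as the ellipsoid $E$---is the standard one and is essentially what is done in \cite{MSprsea}. One cosmetic slip: the set $\overline{B_R(x_0)}\setminus\{y_0\}$ is not compact; what you actually use (and what holds) is that for every open neighborhood $U$ of $y_0$ the set $\overline{B_R(x_0)}\setminus U$ is compact and $d$ is bounded below there by some $s_U>0$, forcing $\Gamma_s\cap B_R(x_0)\subset U$ for $0<s<s_U$.
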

%%                      %%
%%  Proof of Theorem 1.1 begins. %%
%%                      %%

\begin{proofC}
We distinguish two cases: 
$$
{\rm (I)}\  \partial\Omega\ \mbox{ is bounded and of class $C^2$;}\quad {\rm (II)}\  \partial\Omega\ \mbox{ is otherwise.}
$$
\par
Let us first show how we obtain case (II) once we have proved case (I). 
Indeed, we can find two $C^2$ domains, say $\Omega_1$ and $\Omega_2,$ with bounded boundaries, 
and a ball $B_\delta(y_0)$ with the following properties:
$\Omega_1$ and $\mathbb R^N \setminus \overline{\Omega_2}$ are bounded; 
$B_R(x_0)\subset\Omega_1 \subset \Omega \subset \Omega_2;$
\begin{equation*}
\label{two constructed domains}
B_\delta(y_0) \cap \partial\Omega \subset \partial\Omega_1\cap\partial\Omega_2\ 
\mbox{ and }\ \overline{B_R(x_0)} \cap \left(\mathbb R^N\setminus\Omega_i \right)= \{y_0\}\ \mbox{ for } i=1, 2.
 \end{equation*}
 \par
Let $u_i=u_i(x,t)\ (i=1,2)$ be the two bounded solutions of either problem \eqref{diffusion}-\eqref{initial} or problem \eqref{cauchy} where $\Omega$ is replaced by $\Omega_1$ or $\Omega_2$, respectively.
Since $\Omega_1 \subset \Omega \subset \Omega_2$, it follows from the comparison principle that
$$
u_2 \le u \ \mbox{ in }\ \Omega \times (0,+\infty) \ \mbox{ and }\ u \le u_1\ \mbox{ in }\ \Omega_1\times(0,+\infty).
$$
Therefore, it follows that for every $t > 0$
$$
t^{-\frac{N+1}4 }\!\!\!\int\limits_{B_R(x_0)}\! u_2(x,t)\ dx \le t^{-\frac{N+1}4 }\!\!\!\int\limits_{B_R(x_0)}\! u(x,t)\ dx \le t^{-\frac{N+1}4 }\!\!\!\int\limits_{B_R(x_0)}\! u_1(x,t)\ dx.
$$
These two inequalities show that case (I) implies case (II).
\par
Now, let us consider case (I).  
First, we take care of problem  \eqref{diffusion}-\eqref{initial}. 
Lemma \ref{barriers for IBVP} implies that for every $t \in (0,\tau]$
\begin{equation}
\label{estimates from both sides}
t^{-\frac{N+1}4 }\!\!\!\int\limits_{B_R(x_0)} w_- \ dx \le t^{-\frac{N+1}4 }\!\!\! \int\limits_{B_R(x_0)} u \ dx \le t^{-\frac{N+1}4 }\!\!\!\int\limits_{B_R(x_0)} w_+ \ dx.
\end{equation}
Also, with the aid of the co-area formula, we have:
$$
\int\limits_{B_R(x_0)} w_\pm\ dx = t^{\frac{N+1}4}\int_0^{2Rt^{-\frac 12}} f_\pm(\xi)\xi^{\frac {N-1}2} \left(t^{\frac 12}\xi\right)^{-\frac {N-1}2} \mathcal H^{N-1}\left(\Gamma_{t^{\frac12}\xi}\cap B_R(x_0)\right) d\xi.
$$
Thus, when $\kappa_j(y_0)<\frac1{R}$ for every $j=1,\dots,N-1,$ 
by Lebesgue's dominated convergence theorem, \eqref{exponential decay1}, and Lemma \ref{lm:asympvol},  we get
$$
\lim_{t\to 0^+} t^{-\frac{N+1}4}\!\!\!\int\limits_{B_R(x_0)} w_\pm\ dx = 2^{\frac{N-1}2}\omega_{N-1}\left\{\prod_{j=1}^{N-1}\left(\frac 1R-\kappa_j(y_0)\right)\right\}^{-\frac 12}\int_0^\infty f_\pm(\xi) \xi^{\frac{N-1}2} d\xi.
$$
Moreover, again by Lebesgue's dominated convergence theorem, \eqref{exponential decay1}, and \eqref{converge to f1}, we see  that
$$
\lim_{\varepsilon \to 0}\int_0^\infty f_\pm(\xi) \xi^{\frac{N-1}2} d\xi = \int_0^\infty f_1(\xi) \xi^{\frac{N-1}2} d\xi.
$$
Therefore, since $\varepsilon > 0$ is arbitrarily small in \eqref{estimates from both sides},  it follows that \eqref{asymptotics and curvatures} holds true, where we set
$$
c(\phi,N) = 2^{\frac{N-1}2}\omega_{N-1}\int_0^\infty f_1(\xi) \xi^{\frac{N-1}2} d\xi.
$$
\par
It remains to consider the case where $\kappa_j(y_0) = \frac 1R$ for some $j \in \{ 1, \cdots, N-1\}$. Choose a sequence of balls $\{ B_{R_k}(x_k) \}_{k=1}^\infty$  satisfying:
$$
 R_k < R,\ y_0 \in \partial B_{R_k}(x_k), \mbox{ and } B_{R_k}(x_k) \subset B_R(x_0)\mbox{ for every } k \ge 1,\ \mbox{ and }\ \lim_{k\to \infty}R_k = R.
$$
Since $\kappa_j(y_0)\le \frac 1R<\frac1{R_k}$ for every $j=1,\dots,N-1$ and every $k \ge 1$, we can apply the previous case to each $B_{R_k}(x_k)$ to see
 that for every $k \ge 1$
\begin{eqnarray*}
\liminf_{t \to 0^+}t^{-\frac{N+1}4 }\!\!\!\int\limits_{B_R(x_0)} u(x,t)\ dx &\ge& \liminf_{t \to 0^+}t^{-\frac{N+1}4 }\!\!\!\int\limits_{B_{R_k}(x_k)} u(x,t)\ dx 
\\
&=& c(\phi,N)\left\{\prod\limits_{j=1}^{N-1}\left(\frac 1{R_k} - \kappa_j(y_0)\right)\right\}^{-\frac 12}.
\end{eqnarray*}
Hence, letting $k \to \infty$ yields that 
$$
\liminf_{t \to 0^+}t^{-\frac{N+1}4 }\!\!\!\int\limits_{B_R(x_0)} u(x,t)\ dx = +\infty,
$$
which completes the proof for problem  \eqref{diffusion}-\eqref{initial}.
\par
The proof of \eqref{asymptotics and curvatures} in the case of problem \eqref{cauchy}
runs similarly with the aid of Lemmas \ref{barriers for Cauchy} and \ref{lm:asympvol}.
\end{proofC}

%%%%%%%%%%%%%%%%%%%%%%%%%%%%%%%%%%%%%%%%%%%%%%%%%%%%%%
%%%%%%%%%%%%%%%%%%%%%%%%%%%%%%%%%%%%%%%%%%%%%%%%%%%%%%
%%%%%%%%%%%%%%%%%%%%%%%%%%%%%%%%%%%%%%%%%%%%%%%%%%%%%%
%%%%%%%%%%%%%%             Appendix begins        %%%%%%%%%%%%%%%%%%%%%%%%
%%%%%%%%%%%%%%%%%%%%%%%%%%%%%%%%%%%%%%%%%%%%%%%%%%%%%%
%%%%%%%%%%%%%%%%%%%%%%%%%%%%%%%%%%%%%%%%%%%%%%%%%%%%%%
%%%%%%%%%%%%%%%%%%%%%%%%%%%%%%%%%%%%%%%%%%%%%%%%%%%%%%

\setcounter{equation}{0}
\setcounter{theorem}{0}

\def\theequation{A.\arabic{equation}}
\def\thetheorem{A.\arabic{theorem}}

\appendix
\section*{Appendix}
\label{appendix}
Here, for the reader's convenience, we give proofs of several facts used in Section \ref{section3}, and prove a comparison principle (Theorem \ref{th:comparison principle}) for $\partial_t u=\Delta \phi(u)$
over general domains $\Omega$ including the case where $\partial\Omega$ is  unbounded.

\vskip 2ex
\noindent
{\bf Proof of {\rm \eqref{integral formula for derivative of v}-\eqref{monotonicity of derivatives of v}.}}
First of all,  \eqref{integral formula for derivative of v} and \eqref{monotonicity of f with parameter} imply \eqref{monotonicity of derivatives of v}.
It suffices to prove  \eqref{integral formula for derivative of v} and \eqref{monotonicity of f with parameter}. Let $c > 0.$ 
By integrating equation \eqref{ode selfsimilar} on $[0, \eta]$ for every $\eta > 0$ and integrating by parts, we get
\begin{equation*}
\label{integrating by parts 1}
v_c^\prime(\eta)-v_c^\prime(0) +\frac 12\eta f_c(\eta)-\frac 12\int_0^\eta f_c(\xi)\ d\xi=0.
\end{equation*}
Then, with the aid of \eqref{estimate of v prime} and \eqref{estimate of f }, letting $\eta \to \infty$ yields \eqref{integral formula for derivative of v}.

Let $0 < c_1 < c_2 < +\infty$. Since $f_{c_1}(0) = c_1 < c_2 = f_{c_2}(0)$, suppose that there exists $\xi_0 > 0$ satisfying
$$
f_{c_1}(\xi_0) = f_{c_2}(\xi_0)\ \mbox{ and }\ f_{c_1}(\xi) < f_{c_2}(\xi)\ \mbox{ for every } \xi \in [0, \xi_0).
$$
Then it follows from the uniqueness of  solutions of Cauchy problems for ordinary differential equations that 
\begin{equation}
\label{non tangential 0}
v_{c_2}^\prime(\xi_0) < v_{c_1}^\prime(\xi_0) < 0.
\end{equation}
Thus, we distinguish two cases:

\noindent
 (i)\ There exists $\xi_1 \in (\xi_0, \infty)$ satisfying
 $$
f_{c_1}(\xi_1) = f_{c_2}(\xi_1)\ \mbox{ and }\ f_{c_1}(\xi) > f_{c_2}(\xi)\ \mbox{ for every } \xi \in (\xi_0, \xi_1);
$$

\noindent
(ii)\ For every $\xi \in (\xi_0,\infty)$, $f_{c_1}(\xi) > f_{c_2}(\xi).$

\vskip 1ex
\noindent
In case (i), by the uniqueness, we also have
\begin{equation}
\label{non tangential 1}
v_{c_1}^\prime(\xi_1) < v_{c_2}^\prime(\xi_1) < 0.
\end{equation}
By integrating equation \eqref{ode selfsimilar} on $[\xi_0, \xi_1]$ for $f_{c_1}$ and $f_{c_2}$ and integrating by parts, we see that for $j =1, 2$
\begin{equation*}
\label{integrating by parts 2}
v_{c_j}^\prime(\xi_1)-v_{c_j}^\prime(\xi_0) +\frac 12\xi_1 f_{c_j}(\xi_1)- \frac 12\xi_0 f_{c_j}(\xi_0)-\frac 12\int_{\xi_0}^{\xi_1} f_{c_j}(\xi)\ d\xi=0.
\end{equation*}
Then, considering the difference of these two equalities yields
$$
v_{c_1}^\prime(\xi_1) - v_{c_2}^\prime(\xi_1) - (v_{c_1}^\prime(\xi_0) - v_{c_2}^\prime(\xi_0)) - \frac 12\int_{\xi_0}^{\xi_1}( f_{c_1}(\xi)- f_{c_2}(\xi) )\ d\xi=0.
$$
This contradicts \eqref{non tangential 0}, \eqref{non tangential 1} and the situation of case (i). 

\vskip 1ex
\noindent
In case (ii), by integrating equation \eqref{ode selfsimilar} on $[\xi_0, \infty)$ for $f_{c_1}$ and $f_{c_2}$ and integrating by parts, we see that for $j =1, 2$
\begin{equation*}
\label{integrating by parts 3}
-v_{c_j}^\prime(\xi_0) - \frac 12\xi_0 f_{c_j}(\xi_0)-\frac 12\int_{\xi_0}^{\infty} f_{c_j}(\xi)\ d\xi=0.
\end{equation*}
Then, considering the difference of these two equalities yields
$$
- (v_{c_1}^\prime(\xi_0) - v_{c_2}^\prime(\xi_0)) - \frac 12\int_{\xi_0}^{\infty}( f_{c_1}(\xi)- f_{c_2}(\xi) )\ d\xi=0.
$$
This contradicts \eqref{non tangential 0} and the situation of case (ii). \qed

\vskip 2ex
\noindent
{\bf Proof of the existence and uniqueness of the solution of problem {\rm \eqref{ode selfsimilar on whole line}-\eqref{monotonicity on whole line}}. }
Let $c > 0$ and define $\psi : \mathbb R \to \mathbb R$ by
\begin{equation}
\label{definition of psi}
\psi(s) = \phi(c)-\phi(c-s)\ \mbox{ for } s \in \mathbb R.
\end{equation}
Then $\psi$ satisfies the same condition \eqref{nonlinearity} as $\phi$ does. It was shown in \cite{AtP} that, for every $a > 0$, there exists a unique $C^2$ solution $g_a = g_a(\xi)$ of the problem:
\begin{eqnarray}
&& \left( \psi^\prime(g_a) g_a^\prime\right)^\prime + \frac 12 \xi  g_a^\prime = 0\ \mbox{ in }\ [0,+\infty),
%\label{ode selfsimilar psi}
\\
&& g_a(0) = a,\quad g_a(\xi) \to 0\ \mbox{ as }\ \xi \to +\infty,
%\label{boundary conditions psi}
\\
&& g_a^\prime < 0\ \mbox{ in }\ [0,+\infty).\label{monotonicity psi}
\end{eqnarray}
Hence,  writing $V_a = V_a(\xi) = \psi\left(g_a(\xi)\right)$ for $\xi \in [0,+\infty)$ and proceeding similarly yield that 
\begin{eqnarray*}
&&-V_a^\prime(0) = \frac 12\int_0^\infty g_a(\xi)\ d\xi\ \mbox{ for } \ a > 0;\label{integral formula for derivative of V}
\\
&&0 <  g_{a_1} < g_{a_2}\ \mbox{  on } \ [0,+\infty) \ \mbox{ if }\ 0 < a_1 < a_2 < +\infty;\label{monotonicity of g with parameter}
\\
&& 0 > V_{a_1}^\prime(0) >  V_{a_2}^\prime(0) \ \mbox{ if }\ 0 < a_1 < a_2 < +\infty. \label{monotonicity of derivatives of V}
\end{eqnarray*}
For $a \in (0, c)$, define $f_{a,-} = f_{a,-}(\xi)$  by
\begin{equation*}
\label{on negative line}
f_{a,-}(\xi) = c - g_a(-\xi)\ \mbox{ for } \xi \in (-\infty, 0].
\end{equation*}
Then, in view of  \eqref{definition of psi}-\eqref{monotonicity psi}, $f_{a,-}$ satisfies the following:
\begin{eqnarray*}
&& \left( \phi^\prime(f_{a,-}) f_{a,-}^\prime\right)^\prime + \frac 12 \xi  f_{a,-}^\prime = 0\ \mbox{ in }\ (-\infty,0],\label{ode selfsimilar on negative line}
\\
&& f_{a,-}(0) = c-a,\quad f_{a,-}(\xi) \to c\ \mbox{ as }\ \xi \to -\infty,\label{boundary conditions on negative line}
\\
&& f_{a,-}^\prime < 0\ \mbox{ in }\ (-\infty,0].\label{monotonicity on negative line}
\end{eqnarray*}
Let  $f_{a,+} = f_{a,+}(\xi)\ (\xi \in [0,+\infty) )$  be the unique $C^2$ solution $f_{c-a}$ of problem \eqref{ode selfsimilar}-\eqref{monotonicity} where $c$ is replaced by $c-a$. Then we have
$$
\left(\phi(f_{a,-})\right)^\prime \left|_{\xi = 0}\right. = V_a^\prime(0)\ \mbox{ and }\ \left(\phi(f_{a,+})\right)^\prime \left|_{\xi = 0}\right. = v_{c-a}^\prime(0),
$$
where $v_{c-a}(\xi) = \phi(f_{a,+}(\xi))$ for $\xi \in [0,+\infty)$. Observe that both $V_a^\prime(0)$ and $v_{c-a}^\prime(0)$ are continuous as functions of $a$ on the interval $[0,c]$, $V_a^\prime(0)$ is strictly decreasing, $v_{c-a}^\prime(0)$ is strictly increasing, and $\lim\limits_{a \to 0}V_a^\prime(0) = \lim\limits_{a \to c}   v_{c-a}^\prime(0) = 0$.
Therefore, there exists a unique $a_* \in (0,c)$ satisfying $V_{a_*}^\prime(0) = v_{c-a_*}^\prime(0)$, and hence the unique $C^2$ solution $f_c= f_c(\xi)$ of problem {\rm \eqref{ode selfsimilar on whole line}-\eqref{monotonicity on whole line}} is given by
$$
f_c(\xi) = \left\{\begin{array}{rll}
  f_{a_*,+}(\xi)&\mbox{ if }\ \xi \in [0,+\infty),
\\
 f_{a_*,-}(\xi)&\mbox{ if }\ \xi \in (-\infty,0).
\end{array}\right. \qed
$$
 
\vskip 2ex
\noindent
{\bf Proof of  {\rm \eqref{integral formula for derivative of v cauchy}-\eqref{ monotonicity of derivatives of v cauchy}}}.  The proof of \eqref{integral formula for derivative of v} also works for \eqref{integral formula for derivative of v cauchy}. \eqref{integral formula for derivative of v cauchy} and \eqref{monotonicity of f with parameter cauchy} imply \eqref{ monotonicity of derivatives of v cauchy}. Thus it suffices to prove \eqref{monotonicity of f with parameter cauchy}.  Let $0 < c_1 < c_2 < +\infty$. Since
$\lim\limits_{\xi \to -\infty} f_{c_1}(\xi) = c_1 < c_2 = \lim\limits_{\xi \to -\infty} f_{c_2}(\xi)$, there exists $\xi_* < 0$ satisfying
$$
f_{c_1}(\xi) < f_{c_2}(\xi)\ \mbox{ for every } \xi \le \xi_*.
$$
Hence we can begin with supposing that there exists $\xi_0 > \xi_*$ satisfying
$$
f_{c_1}(\xi_0) = f_{c_2}(\xi_0)\ \mbox{ and }\ f_{c_1}(\xi) < f_{c_2}(\xi)\ \mbox{ for every } \xi \in [\xi_*, \xi_0).
$$
Therefore, the rest of the proof runs along that of \eqref{monotonicity of f with parameter}. \qed

\vskip 2ex
\noindent
{\bf Proof of  {\rm\eqref{from above and below on whole line}}}. In view of \eqref{monotonicity of f with parameter cauchy} and \eqref{ monotonicity of derivatives of v cauchy}, by continuity, we can find a sufficiently small $0 < \eta_\varepsilon << \varepsilon$ and two $C^2$ functions $f_\pm = f_\pm(\xi)$ for $\xi \in \mathbb R$ satisfying \eqref{scaling of f on whole line}, \eqref{monotonicity both on whole line}, \eqref{ modified ode on whole line} and the following:
\begin{eqnarray}
&& f_- < f_1 < f_+\ \mbox{ on } [0,+\infty);\label{inequality on positive line}
\\
&& 0 < f_{1-\frac 32\varepsilon} < \tilde{f}_- < f_{1-\frac 12\varepsilon} < f_{1+\frac 12\varepsilon}< \tilde{f}_+ <  f_{1+\frac 32\varepsilon}\ \mbox{ at } \xi = 0;\quad\label{value comparison at 0}
\\
&&0 > v^\prime_{1-\frac 32\varepsilon} > (\phi(\tilde{f}_-))^\prime > v^\prime_{1-\frac 12\varepsilon} > v^\prime_{1+\frac 12\varepsilon} > 
(\phi(\tilde{f}_+))^\prime  > v^\prime_{1+\frac 32\varepsilon}\ \mbox{ at } \xi =0,\quad\label{derivative comparison at 0}
\end{eqnarray}
where we put $\tilde{f}_\pm(\xi) = f_\pm(\xi\pm2\eta_\varepsilon^2)$ for $\xi \in \mathbb R$. Notice that $\tilde{f}_\pm = \tilde{f}_\pm(\xi)$ satisfy
\begin{equation}
\label{original ode}
 (\phi^\prime(\tilde{f}_\pm)\tilde{f}_\pm^\prime)^\prime + \frac 12\xi \tilde{f}_\pm^\prime = 0\ \mbox{ in } (-\infty,0].
 \end{equation}
 In order to prove \eqref{from above and below on whole line}, it suffices to show that
 \begin{equation}
 \label{4 inequalities}
 f_{1-\frac 32\varepsilon} < \tilde{f}_- < f_{1-\frac 12\varepsilon} \mbox{ and } f_{1+\frac 12\varepsilon}< \tilde{f}_+ <  f_{1+\frac 32\varepsilon} \ \mbox{ in } (-\infty,0].
 \end{equation}
Indeed,  \eqref{monotonicity both on whole line} implies that $f_-< \tilde{f}_-$ and $\tilde{f}_+ < f_+$ in $\mathbb R$, and hence \eqref{4 inequalities} and \eqref{monotonicity of f with parameter cauchy} give us
$$
f_-< f_1 < f_+\ \mbox{ on } (-\infty,0].
$$
Combining this with \eqref{inequality on positive line} yields that 
\begin{equation}
\label{inequality on whole line}
f_-< f_1 < f_+\ \mbox{ in }\mathbb R. 
\end{equation}
Also, since $\lim\limits_{\xi \to -\infty}f_\pm= \lim\limits_{\xi \to -\infty}\tilde{f}_\pm$, \eqref{4 inequalities} implies that 
\begin{equation}
\label{inequality at negative infinity}
1-\frac 32\varepsilon \le f_-(-\infty)\le 1-\frac 12\varepsilon < 1=f_1(-\infty) < 1+\frac 12\varepsilon \le f_+(-\infty) \le 1+\frac 32\varepsilon.
\end{equation}
 Therefore, \eqref{inequality on whole line} and \eqref{inequality at negative infinity} yield \eqref{from above and below on whole line}.
 
Thus, it remains to prove \eqref{4 inequalities}. \eqref{4 inequalities} consists of four inequalities. Since we will see that all the proofs are similar, let us prove the fourth one:
\begin{equation}
\label{4th inequality}
\tilde{f}_+ <  f_{1+\frac 32\varepsilon} \ \mbox{ in } (-\infty,0].
\end{equation}
By \eqref{value comparison at 0}, we have $\tilde{f}_+ <  f_{1+\frac 32\varepsilon}\ \mbox{ at } \xi = 0$. Hence, suppose that there exists $\xi_0 < 0$ satisfying
\begin{equation}
\label{on the interval}
\tilde{f}_+(\xi_0) = f_{1+\frac 32\varepsilon}(\xi_0)\ \mbox{ and } \tilde{f}_+ <  f_{1+\frac 32\varepsilon} \mbox{ on } (\xi_0, 0].
\end{equation}
Then,  by the uniqueness we also have 
\begin{equation}
\label{inequality of derivatives at the left-hand side}
v^\prime_{1+\frac 32\varepsilon} > (\phi(\tilde{f}_+))^\prime \ \mbox{ at } \xi = \xi_0.
\end{equation}
By \eqref{derivative comparison at 0}, we have 
\begin{equation}
\label{inequality of derivatives at the right-hand side}
(\phi(\tilde{f}_+))^\prime  > v^\prime_{1+\frac 32\varepsilon}\ \mbox{ at } \xi =0.
\end{equation}
Here,  integrating equations \eqref{original ode} for $\tilde{f}_+$ and \eqref{ode selfsimilar on whole line} for $f_{1+\frac 32\varepsilon}$ on the interval $[\xi_0,0]$, integrating by parts, considering the difference of the two resultant equalities, and using the fact that $\tilde{f}_+(\xi_0) = f_{1+\frac 32\varepsilon}(\xi_0)$, yield  that
\begin{equation*}
%\label{integral equation useful}
v^\prime_{1+\frac 32\varepsilon}(0) - (\phi(\tilde{f}_+))^\prime(0) - \left\{v^\prime_{1+\frac 32\varepsilon}(\xi_0) - (\phi(\tilde{f}_+))^\prime(\xi_0)\right\}
- \frac 12\int_{\xi_0}^0 (f_{1+\frac 32\varepsilon}(\xi) - \tilde{f}_+(\xi)) d\xi = 0.
\end{equation*}
On the other hand, by combining  \eqref{on the interval}, \eqref{inequality of derivatives at the left-hand side}, and \eqref{inequality of derivatives at the right-hand side}, we see that the left-hand side of this equality is negative, which is a contradiction.
Therefore, we get \eqref{4th inequality}. \qed
\par

In the next theorem, we prove 
%\marginpar{$\leftarrow$}
a comparison principle over general domains including the case where their boundaries are unbounded,
by adjusting a proof that Bertsch, Kersner and Peletier gave for the Cauchy problem
(see \cite[Appendix, pp. 1005--1008]{BKP}). Observe that, when $\Om= \mathbb R^N$ 
(and \eqref{boundary inequality condition comparison} is dropped), there is no need to use
the approximating sequences $\{D_j\}$ and $\{ D_{j,k}\}$ constructed in our proof below,
since the sequence of balls $\{ B_{R_k}(0)\}$ suffices, as in \cite{BKP}.

%%                      %%
%%  Theorem A.1 begins. %%
%%                      %%
\begin{theorem}{\rm (Comparison principle)}
\label{th:comparison principle}
Let  $T > 0$ and let $\Omega$ be a domain in $\mathbb R^N,$ with $N \ge 2$, where $\partial\Omega$ is not necessarily bounded.
Assume that $u, v \in C^{2,1}(\Omega\times (0,T]) \cap L^\infty(\Omega \times (0,T]) \cap C^0(\overline{\Omega}\times (0,T])$ satisfy the following:
\begin{eqnarray}
&&\partial_t u-\Delta \phi(u)\ \le \ \partial_t v-\Delta \phi(v)\ \ \mbox{ in } \Omega\times (0,T],\label{parabolic inequality comparison}
\\
&& u \le v \ \mbox{ on } \partial\Omega \times (0,T],\label{boundary inequality condition comparison}
\\
&& u(\cdot,t) \to u_0(\cdot)\ \mbox{ and } v(\cdot,t) \to v_0(\cdot)\ \mbox{ in } L^1_{loc}(\Omega)\ \mbox{ as } t \downarrow 0,\label{initial inequality condition comparison}
\end{eqnarray}
%\marginpar{$\leftarrow$}
where $u_0,\ v_0 \in L^\infty(\Omega)$ satisfy the inequality $u_0 \le v_0$ in $\Omega$.

Then $u \le v$ in $\Omega\times (0,T]$.
\end{theorem}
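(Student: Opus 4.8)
The plan is to follow the duality (adjoint test‑function) method that Bertsch, Kersner and Peletier used for the Cauchy problem in \cite{BKP}, modified to accommodate the lateral condition \eqref{boundary inequality condition comparison} and the possible unboundedness of $\partial\Omega$. First I would linearize. Set $w=\phi(u)-\phi(v)$; since $\phi'\ge\delta_1>0$, $\phi$ is strictly increasing, so $u\le v$ is equivalent to $w\le 0$, and by the mean value theorem one can write $u-v=a\,w$ in $\Omega\times(0,T]$ with a bounded measurable coefficient $a$ satisfying $\delta_2^{-1}\le a\le\delta_1^{-1}$ (put $a=1/\phi'(u)$ where $u=v$). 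Then \eqref{parabolic inequality comparison} becomes $\partial_t(a w)-\Delta w\le 0$. Mollifying $a$ (extended by the constant $\delta_1^{-1}$ outside a large cylinder) yields smooth functions $a_n$ with the same two‑sided bounds and $a_n\to a$ a.e.

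Next, fix $t_0\in(0,T]$ and a nonnegative $\chi\in C^\infty_c(\Omega)$ with $K:=\operatorname{supp}\chi$. I would exhaust $\Omega$ by an increasing family of bounded domains $D_j\nearrow\Omega$ whose boundary decomposes into a portion contained in $\partial\Omega$ and a portion lying in $\Omega$ that recedes to infinity as $j\to\infty$; because the classical theory of uniformly parabolic equations \cite{LSU} is applied on smooth domains, each $D_j$ is in turn approximated by $C^2$ domains $D_{j,k}$ regularizing the junction of those two portions (when $\Omega=\mathbb R^N$ one simply takes $D_j=B_j(0)$ and the inner regularization is unnecessary, recovering \cite{BKP}). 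For $k$ so large that $K\subset D_{j,k}$, solve on $D_{j,k}\times(0,t_0)$ the backward uniformly parabolic problem $a_n\partial_t\psi+\Delta\psi=0$, $\psi=0$ on the lateral boundary, $\psi(\cdot,t_0)=\chi$; call its solution $\psi=\psi_{j,k,n}$. By the maximum principle $0\le\psi\le\|\chi\|_\infty$, hence $\partial_\nu\psi\le 0$ on $\partial D_{j,k}$; and testing the equation with $\partial_t\psi$ (using $\psi\equiv 0$, so $\partial_t\psi\equiv 0$, on the lateral boundary) gives the energy bound $\iint a_n(\partial_t\psi)^2\le\tfrac12\|\nabla\chi\|_{L^2}^2$, uniformly in $n,k,j$. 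Multiplying $\partial_t(aw)-\Delta w\le 0$ by $\psi\ge 0$, integrating over $D_{j,k}\times(\tau,t_0)$, integrating by parts (the integral $\int_{\partial D_{j,k}}\psi\,\partial_\nu w$ vanishes since $\psi=0$ there), and using $\Delta\psi=-a_n\partial_t\psi$ gives
$$\int_{D_{j,k}}\chi\,(u-v)(\cdot,t_0)\,dx\le\int_{D_{j,k}}\psi(\cdot,\tau)(u-v)(\cdot,\tau)\,dx+\iint w\,(a-a_n)\,\partial_t\psi-\int_\tau^{t_0}\!\!\int_{\partial D_{j,k}}w\,\partial_\nu\psi.$$

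Then I would pass to the limit in the order $\tau\to 0$, $n\to\infty$, $j\to\infty$ (with $k$ chosen large in terms of $j$). As $\tau\to 0$: $\psi$ is continuous up to $t=0$, so by \eqref{initial inequality condition comparison} and $u_0\le v_0$ the first term tends to $\int_{D_{j,k}}\psi(\cdot,0)(u_0-v_0)\le 0$. As $n\to\infty$: Cauchy--Schwarz and the energy bound bound the middle term by $\tfrac1{\sqrt2}\|\nabla\chi\|_{L^2}\big(\iint w^2(a-a_n)^2/a_n\big)^{1/2}$, which $\to 0$ by dominated convergence since $w$ is bounded, $D_{j,k}\times(0,t_0)$ has finite measure, and $a_n\to a$ a.e. with uniform bounds. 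The lateral term splits: on $\partial D_{j,k}\cap\partial\Omega$ one has $w\le 0$ by \eqref{boundary inequality condition comparison} and $\partial_\nu\psi\le 0$, so that contribution is $\le 0$; on the ``artificial'' part $S_{j,k}:=\partial D_{j,k}\setminus\partial\Omega$ one uses $|w|\le 2(\|u\|_\infty+\|v\|_\infty)=:2M$ and a Gaussian barrier for the backward (uniformly parabolic) equation, which—because $\operatorname{dist}(K,S_{j,k})\to\infty$ while $S_{j,k}$ has at most polynomially growing area—forces $\int_\tau^{t_0}\!\int_{S_{j,k}}|\partial_\nu\psi|\le C_j$ with $C_j\to 0$, uniformly in $n$ and $k$. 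Hence $\int_\Omega\chi\,(u-v)(\cdot,t_0)\le 2M C_j$ for all large $j$, so letting $j\to\infty$ yields $\int_\Omega\chi\,(u-v)(\cdot,t_0)\le 0$ for every nonnegative $\chi\in C^\infty_c(\Omega)$; therefore $(u-v)(\cdot,t_0)\le 0$ a.e., hence everywhere by continuity, and since $t_0\in(0,T]$ was arbitrary, $u\le v$ in $\Omega\times(0,T]$.

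The two steps I expect to be delicate are the following. The main obstacle is the control of the error term $\iint w(a-a_n)\partial_t\psi$: one must make the regularization of the nonsmooth coefficient $a$ compatible with a bound on $\partial_t\psi$ that is uniform in all approximation parameters—this is precisely the purpose of the energy identity obtained by testing with $\partial_t\psi$. The second is the geometric bookkeeping of the nested approximations $D_j$, $D_{j,k}$ near the junction of $\partial\Omega$ with the receding artificial boundary, together with the barrier estimate guaranteeing that $\partial_\nu\psi$ is negligible on that artificial boundary uniformly in $n$ and $k$; both difficulties disappear when $\Omega=\mathbb R^N$, which is why \eqref{boundary inequality condition comparison} and the sequences $D_j$, $D_{j,k}$ are only needed in the presence of a genuine boundary.
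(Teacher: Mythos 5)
Your overall strategy is the paper's: it is the Bertsch--Kersner--Peletier duality argument, with an exhaustion of $\Omega$ by smooth bounded subdomains, a backward uniformly parabolic adjoint problem with regularized coefficient, an energy bound to absorb the coefficient error, and a barrier giving exponential smallness of the flux through the distant spherical part of the artificial boundary (the paper packages this with the weight $e^{-|x|}$, a zero-order term $\delta_2 w_{n,j}$ and a final Gronwall step, but those are inessential variants of your weight-free computation). The genuine gap is in your treatment of the lateral boundary near $\partial\Omega$. Theorem \ref{th:comparison principle} assumes no regularity whatsoever of $\partial\Omega$, so you cannot take approximating domains $D_{j,k}$ whose boundary contains a piece of $\partial\Omega$: on such rough domains the classical solvability of your backward problem, the boundedness and very existence of $\partial_\nu\psi$, and the boundary integrations by parts are all unjustified. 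If instead (as in the paper, via the Calder\'on--Zygmund regularized distance and Sard's theorem) the $D_{j,k}$ are taken strictly inside $\Omega$, then the whole of $\partial D_{j,k}$ is ``artificial,'' and your smallness claim for the flux term fails on the portion of $\partial D_{j,k}$ that hugs $\partial\Omega$: its distance to $\mathrm{supp}\,\chi$ stays bounded (at most the distance from $\mathrm{supp}\,\chi$ to $\partial\Omega$), so no Gaussian barrier makes $\int\!\!\int |\partial_\nu\psi|$ small there, and you also cannot assert $w=\phi(u)-\phi(v)\le 0$ there, since that portion lies in $\Omega$ where \eqref{boundary inequality condition comparison} says nothing.

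What is missing is precisely the paper's $\varepsilon$-mechanism: using $u,v\in C^0(\overline{\Omega}\times(0,T])$ together with \eqref{boundary inequality condition comparison}, one gets $\phi(u)\le\phi(v)+\varepsilon$ on a thin inner neighborhood of $\partial\Omega$ for $t\in[\tau,T]$ (see \eqref{near the boundary in k+1}--\eqref{on the boundary close to partial Omega}); on the near-boundary part of $\partial D_{j,k}$ one then writes $w=(\phi(u)-\phi(v)-\varepsilon)+\varepsilon$, discards the first piece by the sign of $\partial_\nu\psi$, and controls the leftover $\varepsilon$-term not by smallness of the flux but by the $L^2$ bound on $\Delta\psi$ (equivalently your bound on $\partial_t\psi$), which yields an error of size $\varepsilon\sqrt{T|D_{j,k}|}$ that is removed by letting $\varepsilon\to0$ before $k\to\infty$. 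Your proposal never uses the continuity of $u,v$ up to $\overline{\Omega}$ in this way, and without it the near-boundary flux term is simply not controlled; this is the step that would fail as written. The remaining ingredients of your argument (the $\tau\to 0$ limit via \eqref{initial inequality condition comparison}, the $(a-a_n)$ error via the energy estimate, and the far-sphere barrier with polynomially growing area against exponential decay) are sound and correspond to the paper's Lemma \ref{estimates for adjoint problems} and step (c).
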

%%                      %%
%%  Remark 4.2 begins. %%
%%                      %%
% \begin{remark} The key point of the proof is how to adjust that of Bertsch, Kersner, and Peletier \cite[APPENDIX, pp. 1005--1008]{BKP}, in which the Cauchy problem is dealt with, to our situation. When $\Omega = \mathbb R^N$ without \eqref{boundary inequality condition comparison}, Theorem \ref{th:comparison principle} also holds true. Indeed, when $\Omega=\mathbb R^N$, below we  can  replace $\{ D_{j,k}\}$ with $\{ B_{R_k}(0)\}$ as in \cite{BKP}, that is, we do not need any approximations $\{D_j\}$ and $\{ D_{j,k}\}$ in the proof of Theorem \ref{th:comparison principle}. 
% \end{remark}

%\marginpar{Check the whole proof}
\begin{proof} 
(a) {\bf Approximating the domain $\Om.$\ }
Let $d=d(x)$ be the distance of $x$ from the closed set $\mathbb R^N\setminus\Omega$ and let $U =\{ x \in \mathbb R^N : d(x) < 1 \}.$ From a lemma due to Calder\'on and Zygmund \cite[Lemma 3.6.1, p. 136]{Z} (see also \cite[Lemma 3.2, p. 185]{CZ}) it follows that there exist a function $\delta=\delta(x) \in C^\infty(U\cap\Omega)$ and a positive number $M = M(N)$ such that
\begin{equation}
\label{smooth approximation of distance}
M^{-1}d(x) \le \delta(x)\le Md(x) \ \mbox{ for all } x \in U\cap\Omega.
\end{equation}
\par
Since $\delta  \in C^\infty(U\cap\Omega)$, in view of \eqref{smooth approximation of distance} and the definition of $U$, Sard's theorem (see \cite{Sa, St}) yields that there exists a strictly decreasing sequence of positive numbers $\{\rho_j\}$ with $\lim\limits_{j\to\infty}\rho_j  = 0$ and $\rho_1 < M^{-1}$ such that every level set 
\begin{equation}
\label{smooth hypersurfaces}
\gamma_j = \{ x \in U\cap\Omega : \delta(x) = \rho_j \} 
\end{equation}
is a union of smooth hypersurfaces in $ \mathbb R^N$.
For each $j \in \mathbb N$, denote by $D_j$  the set satisfying $\partial D_j = \gamma_j$ and $\overline{D_j}\subset \Omega$ ($D_j$ is in general a union of smooth domains).  Moreover, in view of \eqref{smooth approximation of distance},  we may have
\begin{equation}
\label{increasing} 
 \overline{D_{j}} \subset D_{j+1} \ \mbox{ for }  j \in \mathbb N \ \mbox{ and } \ \Omega = \bigcup_{j=1}^\infty D_j.
\end{equation}
Without loss of generality, we may also assume that the origin belongs to all the $D_j$'s.
\par
The intersection $D_j \cap B_{R}(0)$ of $D_j$ with the ball $B_{R}(0)$ may not be
a finite union of Lipschitz domains; however, again by Sard's theorem, the restriction to
$\ga_j$ of the $C^\infty$-smooth map $x\mapsto |x|^2$ is regular at almost any of its values,
and hence there exists a strictly increasing and diverging sequence $\{ R_k\}$ of positive numbers
such that each $\pa B_{R_k}(0)$ is transversal to all the $\ga_j$'s;  thus, for each pair of $j$ and $k,$
$D_j \cap B_{R_k}(0)$ is a finite union of Lipschitz domains with piecewise $C^\infty$-smooth boundaries.
Therefore, by using a partition of unity, 
we can modify the boundary of $D_j \cap B_{R_k}(0)$ near the 
compact submanifold $\gamma_j\cap\partial B_{R_k}(0)$ to get a family 
$\{ D_{j,k}\} $ of finite unions of smooth domains, each one approximating $D_j \cap B_{R_k}(0),$ 
and satisfying the relations
\begin{eqnarray}
&& D_{j-1}\cap B_{R_{k}}(0) \subset D_{j,k} \subset  D_{j+1}\cap B_{R_{k}}(0)\ \left(\subset B_{R_k}(0) \right)\nonumber
\\
&&\mbox{ and }\ \partial D_{j,k} \cap \overline{D_{j-1}} =  \partial B_{R_k}(0) \cap \overline{D_{j-1}},\label{good approximation}
\end{eqnarray}
for every  $j \ge 2$ and $k \in \mathbb N.$
\par
(b) {\bf Constructing test functions.\ }
Set
$$
A = A(x,t) = \left\{\begin{array}{rl}
\frac {\phi(u)-\phi(v)}{u-v}  & \mbox{ if } u(x,t)\not= v(x,t), x \in \Omega, \mbox{ and } t > 0,
\\
\delta_1 &\mbox{ otherwise.}
\end{array}\right.
$$
Then $\delta_1 \le A \le \delta_2$ on $\mathbb R^{N+1}$ and we can 
approximate $A$ by a sequence $\{ A_n\}$ of regularizations satisfying
\begin{eqnarray}
&&A_n \in C^\infty(\mathbb R^{N+1})\ \mbox{ and } \delta_1 \le A_n \le \delta_2\ \mbox{ in } \mathbb R^{N+1} \mbox{ for each } n \in \mathbb N,\label{bounds parabolicity}
\\
&&A-A_n \to 0 \mbox{ in } L^2_{loc}( \mathbb R^{N+1}) \mbox{ as } n \to \infty.\label{L2convergence}
\end{eqnarray}

Let $0 < \tau < s < T$ and choose $\chi \in C_0^\infty(\mathbb R^N),$ 
with support $\supp\chi$ contained in $\Omega,$  such that $0 \le \chi \le 1$ in $\mathbb R^N.$   
In view of \eqref{smooth approximation of distance}, there exist $j_0, k_0 \in \mathbb N$ such that
\begin{equation}
\label{support of test function}
\supp\chi \subset D_{j,k} \ \mbox{ for every pair of } j \ge j_0 \mbox{ and } k \ge k_0.
\end{equation}
\par
Now, choose an integer $k \ge k_0$ and then a number $\varepsilon > 0$.  Since $u, v \in C^0(\overline{\Omega}\times (0,T])$, it follows from
\eqref{boundary inequality condition comparison}  that there exists $\mu > 0$ satisfying
\begin{equation}
\label{near the boundary in k+1}
\phi(u)\le \phi(v)+ \varepsilon\ \mbox{ in } \overline{\Omega_{\mu} \cap B_{R_{k+1}}(0)} \times [\tau,T],
\end{equation}
where  $\Omega_{\mu}$ is given by \eqref{neighborhood of boundary}. Hence, by \eqref{smooth approximation of distance} and  \eqref{good approximation}, we see  that there exists $j_1\ge j_0$ such that
\begin{equation}
\label{on the boundary close to partial Omega}
\phi(u)\le \phi(v)+ \varepsilon\ \mbox{ on } \left(\partial D_{j,k} \setminus D_{j-1}\right) \times [\tau,T]\ \mbox{ for every } j \ge j_1.
\end{equation}
\par
For each $j \ge j_1$ and $n\in\mathbb N$, let $w_{n,j} \in C^\infty(\overline{D_{j,k}}\times[0,s))  \cap C^0(\overline{D_{j,k}}\times[0,s])$ be the unique bounded solution of the problem:
\begin{eqnarray}
&& \partial_t w_{n,j} + A_n \Delta w_{n,j} = \delta_2 w_{n,j}\ \  \quad\mbox{ in } D_{j,k} \times [0,s),\label{adjoint equation}
\\
&&w_{n,j} = 0\ \ \quad\qquad\qquad\qquad\qquad\mbox{ on } \partial D_{j,k} \times [0,s), \label{homogeneous Dirichlet condition}
\\
&& w_{n,j}(x,s) = e^{-|x|} \chi(x)\qquad\qquad\mbox{ for every } x \in D_{j,k}.\label{terminal condition}
\end{eqnarray}
Then, by the parabolic regularity theory (see \cite{LSU}), we see that
$$
w_{n,j} \in  C^\infty\left((\overline{D_{j,k} }\times [0,s]) \setminus ( \{0\} \times \{s)\} )\right) \mbox{ and } \nabla w_{n,j} \in L^\infty(D_{j,k} \times [0,s]),
 $$
and, as in  \cite[Lemma B, p. 1007]{BKP}, we can prove the following lemma.
 %%                      %%
%%  Lemma A.2 begins. %%
%%                      %%
 \begin{lemma}
 \label{estimates for adjoint problems}
 There exists a  constant $c > 0$ depending only on $\chi$ such that, for each $j \ge j_1$ and $n\in\mathbb N$, the solutions $w_{n,j}$ have the following properties:
 \begin{eqnarray*}
 &&\mbox{\rm (i) }\ 0 \le w_{n,j} \le e^{-|x|}\ \mbox{ in } \overline{D_{j,k}}\times[0,s],
 \\
 &&\mbox{\rm (ii) }\ \int_0^sdt \int_{D_{j,k}} A_n (\Delta w_{n,j})^2 dx \le c,
 \\
 &&\mbox{\rm (iii) }\  \sup_{0 \le t \le s} \int_{D_{j,k}} |\nabla w_{n,j}(x,t)|^2 dx \le c,
 \\
 &&\mbox{\rm (iv) }\ 0 \le -\frac {\partial w_{n,j}}{\partial \nu} \le c e^{-R_k} \ \mbox{ on } \left(\partial D_{j,k} \cap \partial B_{R_k}(0) \right) \times [0,s], 
 \end{eqnarray*}
where $\nu$ denotes the unit outward normal vector to $\partial D_{j,k}$.
\end{lemma}
%%                      %%
%%  Remark A.3 begins. %%
%%                      %%
\begin{remark} {\rm The fact that $D_{j,k}\subset B_{R_k}(0)$ (see \eqref{good approximation}) guarantees that the same barrier function as in  \cite[Lemma B, p. 1007]{BKP} can be used to prove {\rm (iv)}. The proofs of the others are the same.}
\end{remark}
\par
(c) {\bf Completion of the proof.\ } For each $j \ge j_1$ and $n\in\mathbb N$,  multiplying \eqref{parabolic inequality comparison} by $w=w_{n,j}$ and integrating by parts the resultant inequality over $D_{j,k}\times [\tau, s]$ yield that 
\begin{eqnarray}
%\begin{multline}
&&0 \ge \int\limits_{D_{j,k}\times[\tau,s]}\left\{ \pa_t(u-v)-\Delta\left[\phi(u)-\phi(v)\right]\right\} w\ dx\,dt \nonumber
\\
&&= \int\limits_{D_{j,k}}\left[(u-v)(x,t) w(x,t)\right]_\tau^s  dx 
- \int\limits_{D_{j,k}\times[\tau,s]} (u-v)\partial_t w\ dx\,dt\nonumber
\\
&&\qquad -\int\limits_\tau^s dt \int\limits_{\pa D_{j,k}} \frac {\partial}{\partial \nu}\left[\phi(u)-\phi(v)\right] w\ d\sigma + \!\!\!\int\limits_{D_{j,k}\times[\tau,s]} \nabla\left[\phi(u)-\phi(v)\right]\cdot \nabla w\, dx\,dt\nonumber
\\
&&= \int\limits_{D_{j,k}}\left\{(u-v)(x,s) e^{-|x|}\chi(x) - (u-v)(x,\tau) w(x,\tau) \right\}  dx\nonumber
\\
&&\qquad -\!\!\!\!\!\int\limits_{D_{j,k}\times[\tau,s]} (u-v)\pa_t w\,dx\,dt +\!\!\!\int\limits_{D_{j,k}\times[\tau,s]} \nabla\left[\phi(u)-\phi(v)-\veps\right]\cdot \nabla w\, dx\,dt;\label{1st integral inequality}
%\end{multline}
\end{eqnarray}
here we used \eqref{homogeneous Dirichlet condition} and \eqref{terminal condition}, and we modified the last term a little for later use. The last in \eqref{1st integral inequality} term equals
\begin{eqnarray*}
&& \int\limits_\tau^s dt \!\!\!\int\limits_{\partial D_{j,k} \setminus \overline{D_{j-1}} } \left[\phi(u)-\phi(v)-\varepsilon\right]\frac {\partial w}{\partial\nu}\,d\sigma
\\
&&\quad 
+ \int\limits_\tau^s dt \!\!\!\int\limits_{\partial D_{j,k} \cap \overline{D_{j-1}} } \left[\phi(u)-\phi(v)-\varepsilon\right]\frac {\partial w}{\partial\nu} \,d\sigma
%\\
%&&\quad 
- \!\!\!\int\limits_{D_{j,k}\times[\tau,s]} \left[\phi(u)-\phi(v)-\varepsilon \right]\,\Delta w\ dx.
\end{eqnarray*} 
Since $\frac {\partial  w}{\partial\nu}  \le 0$ on $\partial D_{j,k} \times [0,s],$  it follows from \eqref{on the boundary close to partial Omega} that the first term above is nonnegative; also, in the third term, we write:
$$
\phi(u)-\phi(v) -\varepsilon=\left\{A_n+(A-A_n)\right\}(u-v) -\varepsilon.
$$
Therefore, it follows from \eqref{1st integral inequality} and \eqref{adjoint equation} that
\begin{eqnarray}
&&0 \ge \int\limits_{D_{j,k}} (u-v)(x,s) e^{-|x|}\chi(x)\ dx - \int\limits_{D_{j,k}} (u-v)(x,\tau) w(x,\tau) \ dx\nonumber
\\
&&\quad + \int\limits_\tau^s dt\int\limits_{\partial D_{j,k} \cap \overline{D_{j-1}} } \left[\phi(u)-\phi(v)-\varepsilon\right]\frac {\partial w}{\partial\nu}\, d\sigma -\delta_2\!\!\!\int\limits_{D_{j,k}\times[\tau,s]} (u-v)w\, dx\,dt\nonumber
\\
&&\quad  - \int\limits_{D_{j,k}\times[\tau,s]} (u-v)(A-A_n)\Delta w\,dx\,dt 
+\varepsilon\!\!\! \int\limits_{D_{j,k}\times[\tau,s]} \Delta w\,dx\,dt.\label{2nd integral inequality}
\end{eqnarray}

Since $u$ and $v$ are bounded, there exists a constant $K > 0$ such that
$$
\max\{|u-v|, |\phi(u)-\phi(v) -\varepsilon| \} \le K\ \mbox{ in } \Omega \times [0,T].
$$
Combining \eqref{good approximation} with (iv) of Lemma \ref{estimates for adjoint problems} yields that  the third term in \eqref{2nd integral inequality} is bounded from below by
$$
-cKe^{-R_k}TN\omega_N R_k^{N-1},
 $$
 where $\omega_N$ is the volume of the unit ball in $\mathbb R^N$. By using (i) of Lemma \ref{estimates for adjoint problems}, we see that the fourth term in \eqref{2nd integral inequality} is bounded from below  by
 $$
 -\delta_2\int\limits_\tau^s dt \int\limits_\Omega \max\{u-v,0\}e^{-|x|}\ dx.
 $$
 With the aid of \eqref{bounds parabolicity}, (ii) of Lemma  \ref{estimates for adjoint problems} yields that the fifth and the sixth terms   in \eqref{2nd integral inequality} are bounded from below  by
$$
-K \frac {\sqrt{c}}{\sqrt{\delta_1}}\left(\int\limits_0^T dt\int\limits_{D_{j,k}} (A-A_n)^2\ dx\right)^{\frac 12}\ \mbox{ and } -\varepsilon\frac {\sqrt{c}}{\sqrt{\delta_1}}\sqrt{T|D_{j,k}|},
$$
respectively, where $|D_{j,k}|$ denotes the $N$-dimensional Lebesgue measure of $D_{j,k}$. Consequently, 
with these bounds and by using \eqref{support of test function} 
in the first term in \eqref{2nd integral inequality}, from \eqref{2nd integral inequality} we obtain:
\begin{eqnarray*}
&&  \int\limits_{\Omega} (u-v)(x,s) e^{-|x|}\chi(x)\ dx \le \int\limits_{D_{j,k}} (u-v)(x,\tau) w_{n,j}(x,\tau) \ dx\\
&& \qquad + cKe^{-R_k}TN\omega_N R_k^{N-1} + \delta_2\int\limits_\tau^s dt \int\limits_\Omega \max\{u-v,0\}e^{-|x|}\ dx\\
&&\qquad +K \frac {\sqrt{c}}{\sqrt{\delta_1}}\left(\int\limits_0^T dt\int\limits_{D_{j,k}} (A-A_n)^2\ dx\right)^{\frac 12} + \varepsilon\frac {\sqrt{c}}{\sqrt{\delta_1}}\sqrt{T|D_{j,k}|}.
 \end{eqnarray*}
\par
Since $\varepsilon > 0$ is arbitrarily chosen and $D_{j,k} \subset B_{R_{k}}(0)$, we can remove the last term in the above inequality. Also, letting $n \to \infty$ and $\tau \to 0$ with in mind \eqref{L2convergence} and 
\eqref{initial inequality condition comparison}, respectively, yield that
\begin{equation*}
\int\limits_{\Omega} (u-v)(x,s) e^{-|x|}\chi(x)\ dx \le  cKe^{-R_k}TN\omega_N R_k^{N-1} + \delta_2\!\!\!\int\limits_{\Om\times[0,s]} \max\{u-v,0\}e^{-|x|}\,dx\,dt.
\end{equation*}
By letting $k \to \infty$, we remove the first term in the right-hand side of this inequality. Then, since $\chi \in C^\infty_0(\mathbb R^N)$ is an arbitrary function satisfying that $0 \le \chi \le 1$ in $\mathbb R^N$ and its support is contained in $\Omega$,  we conclude that for every $s \in [0,T]$
\begin{equation}
\label{final integral inequality}
 \int\limits_{\Omega} \max\{(u-v)(x,s), 0\} e^{-|x|}\ dx \le \delta_2\int\limits_0^s dt \int\limits_\Omega \max\{u-v,0\}e^{-|x|}\ dx.
 \end{equation}
\par
Finally, Gronwall's lemma implies that $u \le v$ in $\Omega\times (0,T]$.
\end{proof}

\vskip 4ex
\bigskip
\noindent{\large\bf Acknowledgement.}
\smallskip

The authors would like to thank Professor Hitoshi Ishii for
the idea introducing the sequence of balls $\{ B_{R_k}(x_k) \}_{k=1}^\infty$ in the end of Section \ref{section3} and the use of sup- and infconvolutions in \eqref{supconvolution} and \eqref{infconvolution} in Section \ref{section simple application}.

\end{document}